\newcommand{\g}{\mathfrak{g}}
\newcommand{\m}{\mathfrak{m}}
\newcommand{\Hom}{\operatorname{Hom}}
\newcommand{\Sym}{\operatorname{Sym}}
\newcommand{\ad}{\operatorname{ad}}
\newcommand{\bbR}{\mathbb{R}}
\newcommand{\fs}{\mathfrak{s}}
\newcommand{\fh}{\mathfrak{h}}
\newcommand{\gl}{\mathfrak{gl}}
\newcommand{\fn}{\mathfrak{n}}
\newcommand{\Der}{\operatorname{Der}}
\newcommand{\csp}{\mathfrak{csp}}
\newcommand{\Fib}{\operatorname{Fib}}
\newcommand{\cC}{\mathcal C}
\newcommand{\A}{\mathcal A}
\newcommand{\X}{\mathcal X}
\newcommand{\N}{\mathcal N}
\renewcommand{\P}{\mathbb P}
\newcommand{\PD}{{\P}D}
\newcommand{\PDD}{\PD^\perp\backslash\P(D^2)^\perp}
\newcommand{\mJ}{\mathcal J}
\newcommand{\Id}{\operatorname{Id}}
\newtheorem{thm}{Theorem}
\newtheorem{cor}{Corollary}
\newtheorem{prop}{Proposition}
\newtheorem{lem}{Lemma}
\theoremstyle{definition}
\newtheorem{defin}{Definition}
\theoremstyle{remark}
\newtheorem{rem}{Remark}
\numberwithin{equation}{section}
\title[Distributions with very large symmetries]{Vector distributions with very large symmetries via rational normal curves}
\author
{Boris Doubrov}
	\address{Belarussian State University\\
	 Nezavisimosti Ave.~4, Minsk 220030\\ Belarus;}
	\email{doubrov@bsu.by}
	\author{Igor Zelenko}
	\address{Department of Mathematics\\
		 Texas A$\&$M University\\
		College Station, TX 77843-3368, USA;}
		\email{zelenko@math.tamu.edu}
\urladdr{\url{http://www.math.tamu.edu/~zelenko}}
\thanks{Zelenko was partly supported by NSF grant DMS-1406193 and Simons Foundation Collaboration Grant for Mathematicians 524213.}
\begin{document}
	\subjclass[2010]{58A30, 14M15, 70G65, 35B06}
	\keywords{vector distributions,  algebra of infinitesimal symmetries,  Tanaka prolongation, Fibonacci numbers, curves in flag varieties, rational normal curves and their secants}
\begin{abstract}
We construct a sequence of rank 3 distributions on $n$-dimensional manifolds for any $n\geq 7$ such that the dimension of their symmetry group grows exponentially in~$n$ (more precisely it is equal to  $\Fib_{n-1}+n+2$, where $\Fib_n$ is the $n$-th Fibonacci number, starting with $\Fib_1=\Fib_2=1$) and such that the maximal order of weighted jet needed to determine these symmetries grows quadratically in ~$n$. These examples are in sharp contrast with the parabolic geometries where the dimension of a symmetry group grows polynomially  with respect to the dimension of the ambient manifold and the corresponding maximal order of weighted jet space is equal to the degree of nonholonomy of the underlying distribution plus~$1$. Our models are closely related to the geometry of certain curves of symplectic flags and of the rational normal curves.
\end{abstract}
	
\maketitle

\section{Introduction}

\subsection{Algebraic statement of the problem: How large can be Tanaka prolongation?}
We start with the pure algebraic setting.
Let $\m$ be a (negatively) graded nilpotent Lie algebra of (negative) depth $\mu$, that is $\m=\oplus_{i=-1}^{-\mu}\m_{i}$. Recall the following 

\begin{defin}
	\label{Tanakadef}
\emph{Tanaka prolongation} of $\m$ is by definition a graded Lie algebra $\g=\g(\m)=\displaystyle{\oplus_{i\in \mathbb Z}} \g_i(\m)$ satisfying the following three properties: 
\begin{enumerate}
	\item  $\g_{-}=\oplus_{i<0}\g_i(\m) = \m$;
	\item (\emph{non-degeneracy}) for any $u\in \g_{i}(\m)$, $i\ge 0$ the condition $[u,\g_{-}]=0$ implies $u=0$;
	\item (\emph{maximality}) $\g$ is the largest graded Lie algebra  satisfying properties (1) and (2). 
\end{enumerate}
\end{defin}

We also assume that $\m$ is generated by $\g_{-1}$. In this case the algebra $\m$ is called \emph{fundamental}.
As described in more detail in subsection \ref{distsec}, a fundamental algebra $\m$ and its Tanaka prolongation $\g$ describe geometrically a left invariant distribution on a Lie group corresponding to $\m$ and its Lie algebra of  infinitesimal symmetries, respectively. Hence, all algebraic questions below can be interpret geometrically as questions on the Lie algebra of infinitesimal symmetries of certain vector distributions.

Denote $\g_{+}=\sum_{i>0}\g_i(\m)$. Note that the degree $0$ component $\g_{0}$ of $\g(\m)$ coincides with the Lie algebra $\Der_0{\m}$ of derivations of $\m$ preserving the grading. In particular, $\g_0\neq 0$, since it contains the grading element, i.e. the elements $e$ such that $[e, x]= i x$ for all $x \in \m_i$, $i<0$.

We say that the Lie algebra $\m$ is \emph{of finite type}, if its Tanaka prolongation is finite-dimensional. According to~\cite{tan}, this is the case if and only if the complexification of $\Der_0(\m)$ contains no non-zero rank 1 elements that annihilate all $\m_{-i}$ for $i\ge 2$. From now on we shall consider that $\m$ satisfies this condition and, thus, $\dim\g(\m) <\infty$. 

We say that the Tanaka prolongation $\g=\g(\m)$ is of positive depth $\nu$, if $\g=\sum_{i=-\mu}^{\nu}\g_i$. The main questions treated in this paper are:
\begin{itemize}
	\item How large can be positive depth of the Tanaka prolongation in comparison with the (negative) depth of $\m$?
	\item How large can be the dimension of the Tanaka prolongation in comparison with the dimension of $\m$. 
\end{itemize}

One class of examples related to these questions comes from graded semisimple Lie algebras. Namely, let $\g$ be a semisimple graded Lie algebra, and let $\m=\g_{-}$. Then except for a few cases (see~\cite{yam}) we have 
\begin{equation}
\label{ss}
\g(\m)=\g.
\end{equation}
In particular, in this class of examples we have $\mu=\nu$ and $\dim\g_{+}=\dim \g_{-}$. Moreover, in all those cases of semisimple $\g$, when  \eqref{ss} does not hold, $\g(\m)$ is infinite dimensional. 

This led to the conjecture that if $\dim \g(\m) <\infty$ the following inequalities are true:
\begin{equation}
\label{mainquest}
\begin{aligned}
&\dim \g_{+} \le \dim \g_{-} (= \dim \m), \\
&\nu \le \mu .
\end{aligned}
\end{equation}
However, we show in this paper that this conjecture does not hold by providing explicit counterexamples.
\subsection{The counterexample}
Let us fix an arbitrary integer $k\ge 2$ and define the fundamental graded nilpotent Lie algebra $\m$ as a $(k+4)$-dimensional vector space with a basis $(X,E_1,\dots,E_k,F_{k-1},F_k, N)$ (the reasons for such notation will become clear later in the paper) and the following non-zero Lie brackets of basis elements:
\begin{gather}
\label{grad1}
\begin{aligned}
	\m_{-1} &= \langle X, E_1, F_{k-1}\rangle;\\
	\m_{-2} &= \langle E_2 = [X,E_1], F_k=[X, F_{k-1}], N = [F_{k-1}, E_1] \rangle;\\
	\m_{-i} &= \langle E_i = [X,E_{i-1}] \rangle,\quad 3\le i \le k.
\end{aligned}
\end{gather}
In particular, its negative depth $\mu$ is equal to $k$. For $k=2$ this Lie algebra corresponds to the parabolic geometry of type $(C_3,\alpha_1)$ (\cite{bryant}). In particular, we have $\mu=\nu=2$ and $\dim\g_{-}=\dim\g_{+} = 6$. 

\begin{thm}
	\label{counterthm}
	For $k\ge 3$ we have 
	\begin{equation}\label{mainid1}
		\dim \g_{\ge 0} = \Fib_{k+3}+2, 
	\end{equation}
	\begin{equation}\label{mainid2}
		\nu = \left\lfloor\cfrac{(k+1)^2}{4}\right\rfloor-2,
	\end{equation}
	where $\Fib_n$ is the $n$-th Fibonacci number (starting with $\Fib_1=\Fib_2=1$).
\end{thm}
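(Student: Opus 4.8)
The plan is to compute the Tanaka prolongation $\g=\g(\m)$ degree by degree, organizing everything around the $\ad_X$-chain $E_1\mapsto E_2\mapsto\dots\mapsto E_k$ (which plays the role of the osculating filtration of a rational normal curve) and the single transverse bracket $N=[F_{k-1},E_1]$. First I would check that $\m$ is of finite type using the criterion recalled above: one verifies that for $k\ge 3$ the only grading-preserving derivation annihilating all of $\m_{-i}$ with $i\ge 2$ is the zero map, so in particular there is no nonzero rank $1$ element of this kind and $\dim\g(\m)<\infty$. Next I would compute $\g_0=\Der_0(\m)$ directly: since $\m$ is fundamental, a degree-$0$ derivation is determined by its restriction to $\m_{-1}=\langle X,E_1,F_{k-1}\rangle$, and imposing the Leibniz rule on the defining brackets (the vanishing of $[E_1,E_2]$, $[F_{k-1},E_2]$, $[X,F_k]$, etc., forces the ``upper'' entries to vanish) makes this restriction lower triangular in the basis $(X,E_1,F_{k-1})$, so that $\dim\g_0=6$, with the grading element sitting inside as expected.

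For the positive part I would use that each $\g_i$ with $i\ge 1$ is determined by a linear map $\m_{-1}\to\g_{i-1}$ subject to the consistency (Leibniz) conditions coming from all brackets of $\m$; propagating such a map along the chain via $\phi(E_j)=[\phi(X),E_{j-1}]+[X,\phi(E_{j-1})]$ shows that the datum is rigidly constrained by its interaction with $\ad_X$ and with the pairing $N$. Solving these linear systems, I expect the crucial outcome to be a two-step recursion relating the entire prolongation for parameter $k$ to those for $k-1$ and $k-2$: writing $T_k=\dim\g_{\ge 0}$, one should obtain
\begin{equation}\label{trec}
	T_k=T_{k-1}+T_{k-2}-2,
\end{equation}
together with the base cases $T_3=10$, $T_4=15$ computed by hand. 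The dependence on the two previous parameters reflects the coupling between the long $\ad_X$-chain (which is lengthened by one when passing from $k-1$ to $k$) and the short chain $F_{k-1}\mapsto F_k$ joined to it through $N$; this is exactly the mechanism producing exponential growth. Given \eqref{trec}, the identity $\Fib_{k+3}+2=(\Fib_{k+2}+2)+(\Fib_{k+1}+2)-2$ yields \eqref{mainid1} by induction.

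To obtain the depth \eqref{mainid2} I would track, at each degree, \emph{which} prolongation elements survive, not only their number, since $\nu$ is the largest $i$ with $\g_i\neq 0$. Geometrically this is governed by the maximal osculating order of the associated rational normal curve and its secants: the last surviving prolongation direction can be produced only after iterating the $\ad_X$-shift and the $N$-coupling the maximal compatible number of times, and this count is exactly the quarter-square $\lfloor(k+1)^2/4\rfloor$ minus the two low-degree corrections. I would again argue by induction on $k$, using the increment $\nu_k-\nu_{k-1}=\lfloor(k+1)/2\rfloor$ together with the base cases $\nu_3=2$, $\nu_4=4$, to reach $\nu=\lfloor(k+1)^2/4\rfloor-2$.

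The main obstacle I anticipate is establishing the recursion \eqref{trec} and the exact vanishing degree rigorously. One must show that the consistency conditions defining $\g_i$ impose no hidden extra relations at any degree, so that the dimension count neither drops below nor exceeds what the recursion predicts, and one must prove sharpness of $\nu$, namely that $\g_\nu\neq 0$ while $\g_{\nu+1}=0$, by exhibiting an explicit top-degree symmetry and showing it cannot be prolonged. Both reduce to controlling the interaction of the $\ad_X$-shift with the transverse pairing $N$ uniformly in the degree; packaging this interaction through the rational normal curve model is what should make the otherwise unwieldy linear algebra tractable.
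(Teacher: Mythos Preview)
Your proposal has a genuine gap at its core: the recursion $T_k=T_{k-1}+T_{k-2}-2$ relates Tanaka prolongations of \emph{different} nilpotent algebras $\m$ (one for each $k$), and there is no structural map between them to support it. The algebra $\m$ for parameter $k-1$ is neither a subalgebra nor a quotient of the one for parameter $k$ in any way that transports prolongation data; for instance the generator $F_{k-2}\in\m_{-1}$ at parameter $k-1$ does not even lie in the negative part at parameter $k$. So there is no evident mechanism by which the linear systems defining $\g_i$ at parameter $k$ ``decouple'' into those at parameters $k-1$ and $k-2$. You correctly flag this as the main obstacle, but ``packaging the interaction through the rational normal curve model'' is not a plan---it is a restatement of the difficulty. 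The same applies to the proposed increment $\nu_k-\nu_{k-1}=\lfloor(k+1)/2\rfloor$: nothing in your setup produces it.

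The paper avoids any recursion on $k$. It introduces a \emph{second} grading on (a partial prolongation of) $\m$, coming from the Heisenberg algebra $\fn=\langle E_0,\dots,E_k,F_0,\dots,F_k,N\rangle$ with its standard contact grading, and proves that $\g(\m)$ coincides as an abstract Lie algebra with the Tanaka prolongation $\g(\fn,\fs)$ for a specific $\fs\subset\csp(\fn_{-1})$ built from the irreducible $\gl(2,\bbR)$ and the quadrics $I_2(\cC)$ vanishing on the rational normal curve. That prolongation is then computed \emph{uniformly in the degree} via the contact-vector-field model: one shows $\g_i(\fn,\fs)=I_2(\cC)^{(i)}=I_{i+2}(\sigma_i\cC)$ for all $i\ge 1$, with dimension $\binom{k-i}{i+2}$ read off from maximal minors of Hankel matrices. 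The Fibonacci number appears through the classical identity $\sum_i\binom{k+2-i}{i}=\Fib_{k+3}$, not through a recursion on $k$; likewise $\nu$ is read off directly as the maximal weighted degree of such a minor. Your degree-by-degree computation in the original grading would have to handle roughly $k^2/4$ degrees with exponentially growing dimensions, and without the bigrading and the identification with the standard prolongation of $I_2(\cC)$ there is no visible way to do this uniformly.
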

Theorem~\ref{counterthm} is proved in sections~\ref{sec2}--\ref{endproof}. 

\textbf{Conclusion.} Theorem~\ref{counterthm}  gives the negative answer to both questions stated in~\eqref{mainquest}. Moreover, we got a sequence of negatively graded nilpotent algebras with dimensions growing to infinity such that the dimensions  of the positively graded part of their Tanaka prolongations grow \emph{exponentially} with respect their dimension and the positive depth of their Tanaka prolongations grows \emph{quadratically} with respect to their negative depth.

\subsection{Geometric consequences: vector distributions with ``very large''  symmetry groups}
\label{distsec}

Negatively graded fundamental Lie algebras  appear as basic local invariants (called Tanaka symbols) of regular vector distributions on manifolds (subbundles of their tangent bundles), while their Tanaka prolongations of a Tanaka symbol are isomorphic to algebras of infinitesimal symmetries of the maximally symmetric distributions with given Tanaka symbol. 

In more detail, given a distribution $D$ on a manifold $M$  one can use the iterative Lie brackets of smooth sections pf $D$ to define a filtration 
\begin{equation}
\label{filtr}
D^{1}(q)\subset D^{2}(q)\subset\ldots
\end{equation}
of the tangent space $T_qM$ for every $q\in M$ by the following recursive formula:  
$$D^{1}(q)=D(q);\quad D^{j}(q):=D^{j-1}(q)+[D,D^{j-1}](q), \quad  j>1.$$
A distribution $D$ is called \emph{bracket-generating} (or \emph{completely nonholonomic}) if for any $q$ there exists $\mu(q)\in\mathbb N$ such that $D^{\mu(q)}(q)=T_q M$.  The minimal positive integer $\mu(q)$ satisfying 
the last identity is called the \emph{degree of nonholonomy} of the distribution $D$ at $q$.

A distribution $D$ is called \emph{regular} at a point $q\in M$ if there is a neighborhood $U$ of $q$ in $M$ such that for each $j<0$ the dimensions of subspaces $D^j(y)$ are constant for all $y\in U$. 

Now assume that $D$ is a bracket generating, $q$ is its regular point, and $\mu$ is its degree of nonholonomy at $q$. 
Set $\m_{-1}(q)\stackrel{\text{def}}{=}D^{1}(q)$ and $\m_{-j}(q)\stackrel{\text{def}}{=}D^{j}(q)/D^{j-1}(q)$ for $1<j<\mu$ and  consider the graded space
\begin{equation}
\label{symbdef}
\mathfrak{m}(q)=\bigoplus_{j=-\mu}^{-1}\m_j(x),
\end{equation}
corresponding to the filtration \eqref{filtr}.

The space $\mathfrak m(q)$  is endowed with the natural structure of a graded Lie algebra. Indeed, take $Y_1\in\m_{-i}(q)$ and $Y_2\in \m_{-j}(q)$ and extend them to the local sections $\widetilde Y_1$ of the distribution $D^i$ and a local section $\widetilde Y_2$ of the distribution $D^j$. Then the bracket $[Y_1,Y_2]\in\m_{i+j}(q)$ is defined as:
\begin{equation}\label{Liebrackets}
[Y_1,Y_2]\stackrel{\text{def}}{=} [\widetilde Y_1,\widetilde Y_2](q) \mod D^{i+j-1}(q).
\end{equation}
It is easy to see that the right-hand side  of \eqref{Liebrackets} does not depend on the choice of local sections $\widetilde Y_1$ and $\widetilde Y_2$. Besides, $\m_{-1}(q)$ generates the whole algebra $\mathfrak{m}(q)$.
The graded nilpotent Lie algebra $\mathfrak{m}(q)$ is called the \emph {Tanaka symbol of the distribution $D$ at the point $q$}.

Now assume that the Tanaka symbols $\mathfrak{m}(q)$ of the distribution $D$ at the point $q$ are isomorphic, as graded Lie algebra, to a fixed fundamental graded Lie algebra $\mathfrak m=\oplus_{i=-\mu}^{-1}\m_i$.
In this case  $D$ is said to be \emph{of constant symbol $\mathfrak{m}$} or just \emph{of type $\mathfrak{m}$}.

If the Tanaka prolongation $\g(\m)$ of $\m$ is finite dimensional, then, as shown by Tanaka \cite{tan,yam}, the algebra $\g(\m)$ of $\m$ is isomorphic to the Lie algebra of infinitesimal symmetries of the so-called \emph{standard (or flat) distribution $D_{\m}$ of constant type $\m$}. To describe the flat distribution, let $M(\m)$ be the simply connected Lie group with the Lie algebra $\m$ and let $e$ be its identity. Then $D_{\m}$ is the left invariant distribution on $M(\m)$ such that $D_{\m}(e)=\m^{-1}$. Besides, the algebra $\g_{\geq 0}=\g_0+\g_+$ is isomorphic to the Lie algebra of the isotropy subgroup of the symmetry group of $D_{\m}$, i.e. of the subgroup of all symmetries preserving the point. Note that the positive depth $\nu$ of $\g(\m)$ is the minimal natural number such that if the infinitesimal symmetry of $D_{\m}$ has the weighted $(\nu+1)$-jet equal to zero then it is trivial or, equivalently, the symmetries are determined by $(\nu+1)$-st weighted jet. 

With all these constructions and terminology our Theorem \ref{counterthm} and Conclusion of the previous subsection implies the following:
\begin{cor}
	\label{geominterp}
	For any $k\ge 3$ there exists a rank $3$ distribution on a $(k+4)$-dimensional manifold with degree of nonholonomy $k$ such that the dimension of its Lie algebra of infinitesmal symmetries is equal to $\Fib_{k+3}+k+6$  and its local symmetries are determined by $\left(\left\lfloor(k+1)^2/4\right\rfloor-1\right)$-st weighted jet.
\end{cor}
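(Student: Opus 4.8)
The plan is to obtain Corollary~\ref{geominterp} as an immediate consequence of Theorem~\ref{counterthm} together with Tanaka's realization theorem recalled in \S\ref{distsec}, so that the task reduces to reading off geometric data from the grading of $\m$ and performing two elementary translations. First I would fix the model: the flat distribution $D_\m$ on the simply connected Lie group $M(\m)$ with Lie algebra $\m$. Three features follow directly from the grading \eqref{grad1}. Since $D_\m(e)=\m_{-1}$ and $\dim\m_{-1}=3$, the distribution has rank $3$; since $\dim M(\m)=\dim\m=k+4$, the ambient manifold has dimension $k+4$; and since $\m$ is fundamental with $\m=\oplus_{i=-k}^{-1}\m_i$, the left-invariant $D_\m$ is bracket-generating with degree of nonholonomy exactly $\mu=k$. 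Left-invariance makes each dimension $\dim D_\m^j$ constant, so $D_\m$ is regular with constant Tanaka symbol $\m$, and, $\g(\m)$ being finite-dimensional (the finite-type case, made explicit by \eqref{mainid1}), Tanaka's theorem applies.

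Next I would perform the two arithmetic translations. Tanaka's theorem identifies the symmetry algebra of $D_\m$ with the full prolongation $\g(\m)=\g_-\oplus\g_{\ge 0}$, so its dimension is $\dim\m+\dim\g_{\ge 0}=(k+4)+(\Fib_{k+3}+2)=\Fib_{k+3}+k+6$ by \eqref{mainid1}. For the jet order, \S\ref{distsec} records that symmetries are determined by their weighted $(\nu+1)$-jet, with $\nu$ the positive depth of $\g(\m)$; substituting \eqref{mainid2} gives $\nu+1=\left\lfloor(k+1)^2/4\right\rfloor-1$, the stated order. The only points demanding care are that the symmetry algebra is the full $\g(\m)$ rather than $\g_{\ge 0}$ alone (whence the summand $\dim\m=k+4$), and the off-by-one between the positive depth $\nu$ and the jet order $\nu+1$ fixed by the conventions of \S\ref{distsec}.

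The genuine obstacle is therefore not in this reduction but in Theorem~\ref{counterthm} itself, which supplies \eqref{mainid1} and \eqref{mainid2} and whose proof is carried out in the deferred sections. I expect the hard part there to be the explicit determination of the positive prolongation $\g_{\ge 0}$: after checking that $\g_0=\Der_0\m$ is small, one must control $\g_l$ for every $l\ge 1$ up to the top degree $\nu$, where the Fibonacci recursion in \eqref{mainid1} and the quadratic top degree in \eqref{mainid2} should emerge from the geometry of the rational normal curve and its secants together with the associated curve of symplectic flags.
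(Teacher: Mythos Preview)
Your proposal is correct and matches the paper's approach: the paper states the corollary as an immediate consequence of Theorem~\ref{counterthm} together with the Tanaka theory recalled in \S\ref{distsec}, and you have simply spelled out that deduction explicitly, including the two arithmetic translations $(k+4)+(\Fib_{k+3}+2)=\Fib_{k+3}+k+6$ and $\nu+1=\lfloor(k+1)^2/4\rfloor-1$. There is nothing to add; the substantive work is indeed deferred to the proof of Theorem~\ref{counterthm}.
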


The geometric constructions behind our models are briefly described in section~\ref{geomsec} (for more details see~\cite{DZ2016}).

\section{Partial Tanaka prolongations}
\label{sec2}

By \emph{a partial Tanaka prolongation} we mean any graded Lie algebra that satisfies the first two properties in Definition~\ref{Tanakadef} of the Tanaka prolongation, but not necessarily the third condition (of maximality).

\subsection{First partial prolongation}
Define \emph{the first partial prolongation $\g'$} of $\m$ as follows:
\begin{gather}\label{firstP}
\begin{aligned}
	&\g'= \langle E_0, E_1, \dots, E_k\rangle + \langle F_0, F_1, \dots, F_k\rangle + \langle N \rangle + \langle X \rangle;\\
	& [E_i,F_{k-i}] =(-1)^i N, \quad i=0,\dots,k;\\ 
	&[X,E_i]=E_{i+1}, \quad [X,F_i]=F_{i+1},\quad i=0,\dots, k-1. 
\end{aligned}
\end{gather}

The degrees of additional basis elements are:
\begin{gather}
\label{grad2}
\begin{aligned}
&\deg E_0 = 0; \\
&\deg F_i = k-2-i,\quad i=0,\ldots,k-2.
\end{aligned}
\end{gather}
It is easy to see that with these definitions $\g'$ is a graded Lie algebra and for any non-zero element $u\in\g'_{\ge0}$ we have $[X,u]\ne 0$.

\subsection{Second partial prolongation}
Further, let 
\begin{eqnarray}
&\fn=\langle E_0,\dots,E_k,F_0,\dots,F_k, N\rangle; \label{Heisen}\\
&\fn_{-1}=\langle E_0,\dots,E_k,F_0,\dots,F_k\rangle , \quad \fn_{-2}=\langle N\rangle; \label{symplect}
\end{eqnarray}
Then $\fn$ is the $(2k+3)$-dimensional Heisenberg algebra and the subspace $\fn_{-1}$ is endowed with the line of symplectic forms (\emph{the canonical conformal symplectic structure on $\fn_{-1}$}), generated, for example, by the symplectic form $\sigma$ given by 
\begin{equation}
\label{sigma}
[X, Y]=\sigma(X, Y)\, N, \quad \forall X, Y \in \fn_{-1}.
\end{equation}

Hence the Lie algebra $\Der_0(\fn)$ (with respect to the grading $\fn=\fn_{-1}\oplus \fn_{-2}$) can be identified with the subalgebra $\csp(\fn_{-1})$ of $\gl(\fn_{-1})$ consisting of all endomorphisms preserving the canonical conformal symplectic structure on $\fn_{-1}$. 

Next, define
\begin{equation}
E=\langle E_0,\dots,E_k\rangle , \quad F=\langle F_0,\dots,F_k\rangle.\label{EF} 
\end{equation}
Note that by \eqref{firstP} and \eqref{symbdef}, the subspaces $E$ and $F$ are Lagrangian with respect to the symplectic form $\sigma$. In particular, $\sigma$ defines the non-degenerate pairing of the spaces $E$ and $F$, and thus, induces the identification
\begin{equation}
\label{EFid}
E\stackrel{\sigma}{\cong} F^*.
\end{equation}
As the form $\sigma$ is defined up to a multiplication by a nonzero constant, this identification is also defined up to a multiplication by a nonzero constant.

Fixing the basis $(E_0,\dots,E_k,F_0,\dots,F_k)$ in $\fn_{-1}$  we have the following identification
\begin{equation}
\csp(\fn_{-1}) \cong  \left\{ \begin{pmatrix} A+\lambda & C \\ B & -A'+\lambda \end{pmatrix}\,: A\in\gl(k+1,\bbR), B=B', C=C' \right\}.
\end{equation}
Here for any $(k+1)\times(k+1)$ matrix $X$ we denote by $X'$ the matrix $J_{k+1}X^tJ_{k+1}^{-1}$, where $J$ is $(k+1)\times (k+1)$ matrix with the $(i, j)$-th entry equal to $\sigma (E_i, F_j)=(-1)^i\delta_{i,k-j}$, $0\le i,j\le k$.

Let $\cC$ be a rational normal curve in the projectivization of the space $E$ such that in the basis $(E_0,\dots,E_k)$ it is given by 
\begin{equation}
\label{rnc}
\cC=\left[u^k:u^{k-1}v: \cfrac{u^{k-2}v^2}{2!}: \cdots:\cfrac{v^k}{k!}\right].
\end{equation}
Note that the operator $\ad X$, which acts on $E$ by $\ad X (E_i) = E_{i+1}$ for $i=0,\dots,k-1$, is an infinitesimal symmetry of $\cC$, that is the one-parametric group $\{\exp(\ad tX): t\in\mathbb R\}$ preserves $\cC$ for all $t\in\bbR$. 

It is well-known (see, for example~\cite{FH1991}) that the symmetry group of $\cC$, is isomorphic to $PGL(2,\bbR)$ irreducibly embedded into $PGL(k+1,\bbR)$. Denote by $\gl(2,\bbR)$ the corresponding subalgebra of $\gl(E)=\gl(k+1,\bbR)$. 

Denote by $I_2(\cC)\subset\Sym^2 E^*$ the space of all homogeneous polynomials of degree $2$ on $E$ vanishing on $\cC$. By constructions, $I_2(\cC)\subset \Sym^2 E^*$ is a $\gl(2, \bbR)$-submodule of $\Sym^2 E^*$. It is well known  $I_2(\cC)$ is exactly the sum of all irreducible $\gl(2, \bbR)$-submodules, except the top dimensional one, in the decomposition of $\Sym^2 E^*$ into the sum of all irreducible $\gl(2, \bbR)$-submodules  (see~\cite[\S11.3]{FH1991}). In other words, if $V_k$ denotes the $(k+1)$-dimensional irreducible  $\gl(2, \bbR)$-submodule, then 
\begin{equation}
\label{sym2decomp}
\Sym^2 E^*=\bigoplus_{i=0}^{\lfloor k/2\rfloor} V_{2k-4i}\end{equation}
and 
\begin{equation}
\label{ICdecomp}
I_2(\cC)=\bigoplus_{i=1}^{\lfloor k/2\rfloor} V_{2k-4i}.\end{equation}
Note that by subspaces $V_{2k-4i}$ in equations \eqref{sym2decomp} and \eqref{ICdecomp} we mean the embeddings of the corresponding abstract $\gl(2, \bbR)$-modules into the $\gl(2, \bbR)$-module $\Sym^2 E^*$.

Using~\eqref{EFid} we can identify $\Sym^2 E^*$ with the corresponding self-adjoint linear maps from $E$ to $E^*\cong F$.
Then, using the grading on $\Hom(E, F)\cong E^*\otimes F$ inherited from the grading on $\fn\subset \g'$,  we get that the highest weight vector (i.e. the vector annihilated by the action of $X$ on  $E^*\otimes F$) of the $\gl(2,\bbR)$-module $V_{2k-4i}$ in the decomposition \eqref{sym2decomp} has degree $2i-2$. Since $\deg X = -1$, this is the minimal degree of elements in $V_{2k-4i}$. This and \eqref{ICdecomp} gives the following characterization of $I_2(\cC)$:
\begin{lem}
	\label{lem+}
$I_2(\cC)$ coincides with the subspace of $\Sym^2 E^*\subset \Hom(E, F)$ spanned by all $\gl(2,\bbR)$-submodules concentrated in non-negative degree with respect to the grading on $\Hom(E, F)$ inherited from the grading of $\fn\subset\g'$.
\end{lem}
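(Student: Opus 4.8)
The plan is to derive the statement directly from the two decompositions \eqref{sym2decomp} and \eqref{ICdecomp} together with the degree count recorded in the paragraph preceding the lemma. The decisive structural feature is that the decomposition \eqref{sym2decomp} of $\Sym^2 E^*$ is \emph{multiplicity-free}: the dimensions $2k-4i+1$ of the summands $V_{2k-4i}$ are pairwise distinct for $0\le i\le\lfloor k/2\rfloor$. Hence every $\gl(2,\bbR)$-submodule of $\Sym^2 E^*$ is the direct sum of some subcollection of the $V_{2k-4i}$, and such a submodule is concentrated in non-negative degree if and only if each of its irreducible summands is. The claim therefore reduces to deciding, for each $i$ separately, whether the single irreducible module $V_{2k-4i}$ lies entirely in degrees $\ge 0$.

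For this I would use that each summand $V_{2k-4i}$ is a graded subspace of $\Sym^2 E^*\subset\Hom(E,F)$: the grading element of $\g'$ normalizes the $\gl(2,\bbR)$-action, and, the decomposition being multiplicity-free, it must preserve each irreducible summand. As recorded above the statement, the vector of $V_{2k-4i}$ annihilated by $X$ has degree $2i-2$; since $\deg X=-1$, the operator $X$ strictly lowers degree, so this vector realizes the minimal degree occurring in $V_{2k-4i}$, and the degrees of $V_{2k-4i}$ fill the integer interval from $2i-2$ to $2k-2i-2$. Consequently $V_{2k-4i}$ is concentrated in non-negative degree exactly when $2i-2\ge 0$, that is when $i\ge 1$, whereas the top summand $V_{2k}$ (the case $i=0$) contains an element of degree $-2$ and so violates the condition, as does every submodule containing it.

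Putting the two steps together, the span of all $\gl(2,\bbR)$-submodules concentrated in non-negative degree is exactly $\bigoplus_{i=1}^{\lfloor k/2\rfloor}V_{2k-4i}$, which by \eqref{ICdecomp} is $I_2(\cC)$, proving the lemma. The only substantive computation is the value $2i-2$ for the degree of the $X$-null vector of $V_{2k-4i}$, and I expect this to be the main obstacle: it requires tracking the grading through the identification $E^*\cong F$ furnished by $\sigma$, which shifts degrees by $-2$ because $N\in\fn_{-2}$ forces $\deg\sigma=-2$, and then passing through the resulting correspondence between $\gl(2,\bbR)$-weights and degrees. Since that count has already been carried out just before the statement, what remains is only the formal multiplicity-free bookkeeping described above.
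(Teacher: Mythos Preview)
Your proposal is correct and follows essentially the same approach as the paper: the paper's argument is precisely the paragraph preceding the lemma, which records the decompositions \eqref{sym2decomp}--\eqref{ICdecomp} and the degree $2i-2$ of the $X$-annihilated vector in $V_{2k-4i}$, and then states the lemma as an immediate consequence. Your write-up simply makes explicit the multiplicity-free bookkeeping (so that every submodule is a sum of the $V_{2k-4i}$ and the grading preserves each summand), which the paper leaves implicit.
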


Now define \emph{the second partial prolongation $\g''$} as follows:
\begin{equation}
\g''= \fs \ltimes \fn, 
\end{equation}
where $\fs$ is the following subalgebra in $\csp(2k+2,\bbR)$:

\begin{equation}
\label{sdef}
	\left\{ \fs = \begin{pmatrix} A+\lambda\Id & 0 \\ B & -A'+\lambda\Id\end{pmatrix}\,: A\in \gl(2,\bbR), B\in I_2(\cC), \lambda\in\bbR\right\},
\end{equation}
or shortly
\begin{equation}
\label{sdecomp}
\fs=\gl(2,\mathbb R)\oplus \langle \Id \rangle \oplus I_2(\cC).
\end{equation}

Finally, note that by Lemma \ref{lem+} we added only elements in non-negative degree and that none of them commutes with $\g''_{-}=\m$.  So, $\g''$ is indeed a partial Tanaka prolongation of $\m$.

\subsection{Second partial prolongation as symmetry algebra of a curve of symplectic flags}

Note that $\fn$ is equipped with two different gradings: the first one is inherited from the embedding $\fn\subset \g'$ and the second one is the standard grading of Heisenberg Lie algebra, that is~$\fn=\fn_{-1}\oplus \fn_{-2}$ with the notation as in \eqref{symplect}.

\begin{prop}
	\label{prop1}
	 Equip $\csp(\fn_{-1})$ with the first grading (i.e., the grading inherited from the grading of $\fn\subset \g'$). Assume that $k\ge 3$. Then the subalgebra $\fs$ is the largest graded subalgebra $\fs$ in $\csp(\fn_{-1})$ such that $\fs_{-} = \langle \ad X \rangle$.
\end{prop}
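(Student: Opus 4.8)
The plan is to exploit the copy of $\mathfrak{sl}(2,\bbR)$ sitting inside $\gl(2,\bbR)\subset\csp(\fn_{-1})$ that comes from the rational normal curve $\cC$, and whose lowering operator is precisely $\ad X$: since $\ad X$ sends $E_i\mapsto E_{i+1}$ and $F_i\mapsto F_{i+1}$ it is the principal nilpotent, the bottom of an $\mathfrak{sl}(2,\bbR)$-triple $\langle\ad X,H,Y\rangle$ whose members have degrees $-1,0,+1$ in the first grading. First I would record the block decomposition $\csp(\fn_{-1})=A\oplus B\oplus C\oplus\langle\Id\rangle$ relative to $\fn_{-1}=E\oplus F$, where $A\cong E\otimes F$ is the $\gl(E)$-block, $B\cong\Sym^2 E^*$ the block of self-adjoint maps $E\to F$, and $C\cong\Sym^2 E$ the block $F\to E$ (in the matrix realization these are the $A$-, $B$- and $C$-entries, respectively), with $\langle\Id\rangle$ the conformal line; each block is invariant both under $\ad(\ad X)$ and under the grading. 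The crucial elementary fact is that on every irreducible $\mathfrak{sl}(2,\bbR)$-summand $V_{2m}$ of a block the first grading equals the affine function $w/2+c$ of the weight $w$, with $c=0$ on $A$, $c=k-2$ on $B$, $c=2-k$ on $C$; hence $\ad(\ad X)$ drops the degree by one and realizes $V_{2m}$ as a single string of consecutive degrees from its bottom $-m+c$ (the vector annihilated by $\ad X$, of minimal degree) up to $m+c$.

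The heart of the argument is the inclusion $\mathfrak{t}\subseteq\fs$ for an arbitrary graded subalgebra $\mathfrak{t}$ with $\mathfrak{t}_-=\langle\ad X\rangle$. Since $\ad X\in\mathfrak{t}$, the space $\mathfrak{t}$ is $\ad(\ad X)$-stable, so for $u\in\mathfrak{t}_d$ with $d\ge0$ one has the ``string condition'' $(\ad\ad X)^{d+1}u\in\mathfrak{t}_{-1}=\langle\ad X\rangle$ and $(\ad\ad X)^{d+2}u=0$. I would project this onto the four blocks—legitimate because $\ad(\ad X)$ preserves blocks and $\langle\ad X\rangle\subset A$—and use that within a block the module is multiplicity-free and $\ad(\ad X)$ is injective on every graded piece strictly above the bottom. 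In $B$ and $C$ the block-projection of $(\ad\ad X)^{d+1}u$ must vanish, so injectivity forces the vanishing of every summand-component whose string descends to degree $-1$: this kills the top summand $V_{2k}$ in $B$ (bottom degree $-2$) and leaves exactly the summands of nonnegative bottom degree, which equal $I_2(\cC)$ by Lemma~\ref{lem+}, and—since for $k\ge3$ every summand of $C$ has bottom degree $\le-1$ (here $\lfloor k/2\rfloor\le k-2$ is used)—kills $C$ entirely. In $A$ the block-projection must instead lie in $\langle\ad X\rangle=(V_2)_{-1}$, which eliminates every $V_{2j}$ with $j\ge2$ and leaves $V_0=\langle\Id_E\rangle$ and $V_2=\mathfrak{sl}(2,\bbR)$, i.e.\ $\gl(2,\bbR)$; the conformal line survives trivially. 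Hence $\mathfrak{t}\subseteq\gl(2,\bbR)\oplus\langle\Id\rangle\oplus I_2(\cC)=\fs$.

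It then remains to verify that $\fs$ is itself a graded subalgebra with $\fs_-=\langle\ad X\rangle$; together with the previous inclusion this shows $\fs$ is the largest such. The statement about the negative part is immediate from the degree table ($\gl(2,\bbR)$ contributes only $\ad X$ below degree $0$, while $\langle\Id\rangle$ and $I_2(\cC)$ lie in degree $\ge0$). For closure I would note that $\gl(2,\bbR)$ is a subalgebra, $\Id$ is central, $[\gl(2,\bbR),I_2(\cC)]\subseteq I_2(\cC)$ because $I_2(\cC)$ is a $\gl(2,\bbR)$-submodule of $\Sym^2 E^*$ and this is exactly the restriction of the adjoint action on the block $B$, and $[I_2(\cC),I_2(\cC)]=0$ since both factors lie in the block $B$ of strictly ``lower-triangular'' maps, whose product vanishes.

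I expect the main obstacle to be the bookkeeping in the second step rather than any single hard idea: one must pin down the bottom and top degrees of each irreducible summand of each block from the charges $c=0,\,k-2,\,2-k$, and—most delicately—handle the fact that a general $u\in\mathfrak{t}_d$ need not lie in one block, which is precisely why projecting the string condition onto blocks and using $\langle\ad X\rangle\subset A$ is indispensable. One should also keep careful track of where the hypothesis $k\ge3$ enters, namely to force block $C$ to contribute nothing and block $A$ to collapse to $\gl(2,\bbR)$ (for $k=2$ this fails, matching the exceptional parabolic nature of that case). Once the degree/charge table is in place, each block reduces to a one-line injectivity statement for $\ad(\ad X)$ along a single $\mathfrak{sl}(2,\bbR)$-string.
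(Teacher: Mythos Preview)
Your proposal is correct and rests on the same representation-theoretic data as the paper's proof: the block decomposition $\csp(\fn_{-1})=\gl(E)\oplus\langle\Id\rangle\oplus\Sym^2E^*\oplus\Sym^2E$, the $\mathfrak{sl}(2,\bbR)$-decomposition of each block, and the identification of the first grading with (half the weight)$+c$ on each block. The difference is purely in the logical packaging.

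The paper argues top-down: it first observes that the \emph{maximal} graded subalgebra $\fs'$ necessarily contains $\gl(2,\bbR)$ (since $\gl(2,\bbR)$ itself is a graded subalgebra with negative part $\langle\ad X\rangle$), so $\fs'$ is a $\gl(2,\bbR)$-submodule and hence a direct sum of full irreducibles. The condition $\fs'_-=\langle\ad X\rangle$ then forces every irreducible other than the adjoint $V_2$ to sit in non-negative degree, and a one-line check shows the sum of all such irreducibles is closed under bracket. You instead argue bottom-up, for an arbitrary $\mathfrak t$: using only $\ad X\in\mathfrak t$ you derive the string condition $(\ad\ad X)^{d+1}u\in\langle\ad X\rangle$ and then kill the unwanted summand-components block by block via injectivity of the lowering operator above the bottom weight. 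Your approach avoids having to first argue that a maximal $\fs'$ exists and contains $\gl(2,\bbR)$; the paper's approach is shorter once that point is granted, since it never needs to track individual strings. One small inaccuracy in your closing remarks: the hypothesis $k\ge3$ is needed only for block $C$ (to ensure every summand there has bottom degree $\le-1$); the collapse of block $A$ to $\gl(2,\bbR)$ already holds for $k\ge2$.
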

\begin{proof}
	Let $\fs'$ be the largest among all graded subalgebras $\fs$ in $\csp(\fn_{-1})$ such that $\fs_{-} = \langle \ad X \rangle$. Since the subalgebra $\gl(2,\bbR)$ does satisfy this condition, we have $\gl(2,\bbR)\subset \fs'$. 
	
	Consider now the action of $\gl(2,\bbR)$ on $\csp(\fn_{-1})$. The algebra $\csp(\fn_{-1})$ is decomposed as an $\gl(2,\bbR)$-module into the direct sum of irreducible $\gl(2,\bbR)$-submodules. Since $\fs_{-}'= \langle \ad X \rangle$, it follows that, apart from $\gl(2,\bbR)$ itself, all other $\gl(2,\bbR)$ submodules inside $\fs'$ must be concentrated in non-negative degree.
	
	It is easy to see that the converse is also true. Indeed, the bracket of any such two submodules is also concentrated in the non-negative degree, and hence the sum of all such submodules forms a subalgebra. Note that 
		\begin{equation}
		\label{decomp}
		\csp(\fn_{-1})\cong \gl(E)\oplus \langle \Id\rangle \oplus \Sym^2 E\oplus \Sym^2 F,
		\end{equation}
where, using the identification \eqref{EFid}, $\Sym^2 F$ is embedded into $\Hom(E, F)$ and $\Sym^2 E$ is embedded into $\Hom(F, E)$. Then it is easy to see that for $k\ge 3$ the component $\Sym^2 E$ in the decomposition~\eqref{decomp} does not contain any irreducible $\gl(2,\bbR)$-submodules concentrated in non-negative degree, the component~$\gl(E)$ contains no such submodules complementary to $\gl(2,\bbR)$, and for $\Sym^2 F$ ($\cong \Sym^2 E^*$)  we can use Lemma \ref{lem+}. 
Hence $\fs'=\gl(2,\bbR)+\langle \Id\rangle + I_2(\cC)=\fs$.  
\end{proof}

\begin{rem}
\label{flatcurve}
Using the first grading (i.e., the grading inherited from the grading of $\fn\subset \g'$), we can define the decreasing  filtration $\{J^i\}_{i=-k}^{k-1}$ on the space $\fn_{-1}$ from \eqref{Heisen}, where $J^i$ is spanned by all elements in $\fn_{-1}$ of first degree $\ge i$. Note that $J^i$ is the skew-orthogonal complement (with respect to the symplectic form $\sigma)$ of $J^{-i-1}$. In particular, the skew-orthogonal complements of $\{J^i\}_{i=-k}^{k-1}$ form the same flag. A flag satisfying the latter property is called~\emph{symplectic}. Let $\mathcal F_X$ be the orbit of the flag $\{J^i\}_{i=-k}^{k-1}$ under the action of the one-parameter subgroup $\{\exp\ad tX\}\subset \mathrm{CSp}(\fn_{-1})$. Obviously, this is a curve of symplectic flags. It turns out (see~\cite{DZ2013}) that the algebra $\fs$ is isomorphic to the subalgebra in $\csp(\fn_{-1})$ consisting of all infinitesimal symmetries of the curve $\mathcal F_X$. 
\end{rem}

\section{Tanaka prolongation for the second grading via secant varieties of rational normal curves}
\label{secantsec}
Recall that the Tanaka prolongation can be defined not only for negatively graded Lie algebras, but also for non-positively graded Lie algebras. The only change that we have to make in Definition \ref{Tanakadef} in the latter case is that in property~(1) we require that the non-positively graded part of the Tanaka prolongation coincides with the original non-positively graded Lie algebra.

Now consider the Heisenberg algebra  $\fn$ with the standard grading  $\fn=\fn_{-1}\oplus \fn_{-2}$ with the notation as in \eqref{symplect} and the algebra $\fs$ as a subalgebra in $\Der_0(\fn)\cong \csp(\fn_{-1})$. Then $\fn\oplus \fs$ it is the non-positively graded Lie algebra (with $\fs$ being the component with degree zero. Denote the Tanaka prolongation of $\fn\oplus\fs$ by $\g(\fn, \fs) = \displaystyle{\oplus_{i\in \mathbb Z} \g_i(\fn, \fs)}$. Now we calculate $\g(\fn, \fs)$, as it is crucial for the proof of our main  Theorem \ref{counterthm} (see Proposition \ref{propequal} in section \ref{endproof}).


 As above, let $\cC$ be the rational normal curve in $P^k = P(E)$. Recall that the $r$-th secant variety $\sigma_r\cC$ is the Zariski closure of the union of all linear spaces in $P^k$ spanned by collections of $r+1$ points on $\cC$. In particular, $\sigma_0\cC = \cC$ and $\sigma_1\cC$ is the closure of the union of all lines passing through two points on $\cC$. 
 
 Similar to defining $\cC$ by quadratic polynomials from $I_2(\cC)$, it is well-known~\cite{IK1999,LO2013} that the secant varieties $\sigma_r\cC$ can be defined as zeros of all polynomial of degree $(r+2)$ from $I_{r+2}(\sigma_r\cC)$. The space $I_{r+2}(\sigma_r\cC)$, in its turn, can be explicitly described by the space of (linearly independent) maximal minors of the catalecticant (or Hankel) matrix:
 \begin{equation}\label{Hankel}
 \begin{pmatrix}
 0!\,x_0 & 1!\,x_1 & 2!\,x_2 & \dots & (k-r-1)!\, x_{k-r-1} \\
 1!\,x_1 &2!\, x_2 & 3!\,x_3 & \dots & (k-r)!\, x_{k-r} \\
 \vdots & & & & \vdots \\
 (r+1)!\,x_{r+1} & (r+2)!\,x_{r+2} & (r+3)!\, x_{r+3} & \dots & k!\, x_k
 \end{pmatrix},
 \end{equation}
 where $[x_0:x_1:\dots:x_k]$ are homogeneous coordinates in $P^k$ with respect to the basis $(E_0, \ldots , E_k)$.
 
In order to compute $\g(\fn,\fs)$ consider first the Tanaka prolongation $\g(\fn)$ of the Heisenberg Lie algebra $\fn$. 
It is well known that it can be identified with the Lie algebra of polynomial contact vector field on $\bbR^{2k+3}$, where the contact structure is given as the annihilator of the following 1-form:
\begin{equation}\label{contactform}
	\theta = dz + \cfrac{1}{2}\sum_{i=0}^k \big(x_i\,dy_i - y_i\, dx_i\big). 
\end{equation}

All such contact vector fields $X_f$ are parametrized by a single polynomial function $f(x_0,\dots,x_k,y_0,\dots,y_k,z)$ such that
\begin{equation}
(L_{X_f} \theta)\wedge \theta = 0, \quad 	\theta(X_f)=f.\
\end{equation} 
Here the first relation is means that the flow of $X_f$ preserves the contact distribution and the second relation is the normalization condition.

By direct computations, in the chosen coordinates
	\begin{equation}
	\label{contcoord}
	X_f=\sum_{i=0}^k\left( -\cfrac{df}{dy_i}\cfrac{d}{dx_i}+\cfrac{df}{dx_i}\cfrac{d}{dy_i}\right)+ f\cfrac{\partial}{\partial z},
	\end{equation}
where 
\begin{equation}
\label{dd}
\frac{d}{dx_i} = \frac{\partial}{\partial x_i} + \frac{y_i}{2} \frac{\partial}{\partial z}, \quad \frac{d}{dy_i} = \frac{\partial}{\partial y_i} - \frac{x_i}{2}\frac{\partial}{\partial z}, \quad i=0,\dots, k.
\end{equation}
The Lie bracket of contact vector fields induces the bracket $\{f,g\}$ of polynomials~$f,g$:
\begin{equation}
	X_{\{f,g\}} = [X_f,X_g].
\end{equation}
Explicitly, we have:

\begin{equation}
\label{bracketdef}
\{f,g\} = f\frac{\partial g}{\partial z} - g\frac{\partial f}{\partial z} +
\sum_{i=0}^k \left(\frac{df}{dx_i}\frac{dg}{dy_i} - \frac{dg}{dx_i}\frac{d f}{dy_i} \right).
\end{equation}

Now we identify $\g(\fn)$ with the space of polynomial functions $f$ equipped with the above Lie bracket. 
First, the Lie algebra $\fn$ itself can be represented in the following form:
\begin{equation}
\begin{aligned}
\label{EFxy}	
\fn_{-2} &= \langle 1 \rangle,\quad \fn_{-1} = E\oplus F,\text{ where}\\
E &= \langle y_0, y_1, \dots, y_n \rangle, \quad  E_i=y_i,\\
F &= \langle x_0, x_1, \dots, x_n \rangle \cong E^*, \quad F_i=(-1)^i x_{k-i}.
\end{aligned}
\end{equation}
Using standard formulas for the irreducible embedding of $\gl(2, \mathbb R)$ into  $\gl(k+1, \mathbb R)$ (see~\cite[\S11.1]{FH1991}) and \eqref{contcoord}, we get that the subalgebra $\fs\subset \g_0(\fn)$ is represented as follows:
\begin{equation}
\label{scoord}
\begin{aligned}
\fs = \langle X &=x_0y_1 + x_1y_2 + \dots + x_{k-1}y_k, \\
H &=\sum_{i=0}^k (k-2i)x_iy_i, \\
Y &=\sum_{i=1}^k i (k+1-i) x_iy_{i-1},\\ 
Z_1&=x_0y_0+x_1y_1+\dots+ x_ky_k\rangle \oplus\langle Z_2=z\rangle \oplus I_2(\cC),
\end{aligned}
\end{equation}
where $I_2(\cC)$ is represented by $2\times 2$ minors of the matrix
\begin{equation}
\begin{pmatrix}
\label{Hankel2}
0!\, x_0 & 1!\,x_1 & \dots & (k-1)!\,x_{k-1} \\
1!\,x_1 & 2!\, x_2 & \dots & k!\, x_k
\end{pmatrix}.
\end{equation}
 
 \begin{thm}\label{gns}
 	Assume $k\ge 3$. Then
 	\begin{equation}
 	\g(\fn,\fs) = \g'' + \sum_{r=1}^{[k/2]-1} I_{r+2}(\sigma_r\cC).
 	\end{equation}
 \end{thm}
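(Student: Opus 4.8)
The plan is to realize $\g(\fn,\fs)$ inside the full Tanaka prolongation $\g(\fn)$ of the Heisenberg algebra, i.e. inside the Lie algebra of polynomial contact vector fields described above, and to compute its positive part degree by degree. Since $\fn\oplus\fs$ is a graded subalgebra of the non-positive part $\fn\oplus\csp(\fn_{-1})$ of $\g(\fn)$, the recursive definition of the prolongation (an inclusion of non-positive parts induces an inclusion of the full prolongations) identifies $\g(\fn,\fs)$ with the largest graded subalgebra of $\g(\fn)$ whose non-positive part equals $\fn\oplus\fs$. Concretely, writing $x_i,y_i$ for weight $1$ and $z$ for weight $2$ (shifted by $-2$, so $\g_p$ is spanned by monomials $x^\alpha y^\beta z^\gamma$ with $|\alpha|+|\beta|+2\gamma=p+2$), we have for $p\ge 1$ that $\g_p(\fn,\fs)$ consists of the weighted-homogeneous $f$ of degree $p$ with $\{f,\fn_{-1}\}\subseteq\g_{p-1}(\fn,\fs)$, computed recursively from $\g_0=\fs$. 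From \eqref{bracketdef} one reads off $\{f,y_j\}=\partial_{x_j}f-\tfrac{y_j}{2}\partial_z f$ and $\{f,x_j\}=-\partial_{y_j}f-\tfrac{x_j}{2}\partial_z f$, and $\{f,1\}=-\partial_z f$.

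The key structural claim is that the whole positive part is concentrated in pure-$x$ polynomials: $\g_p(\fn,\fs)\subseteq\bbR[x_0,\dots,x_k]$ for all $p\ge 1$. I would prove this by induction on $p$. Bracketing with the center $\fn_{-2}=\langle 1\rangle$ gives $\{f,1\}=-\partial_z f\in\g_{p-2}(\fn,\fs)$, which by induction is pure-$x$; this forces the $z$-degree of $f$ to be $\le 1$, say $f=f_1 z+f_0$ with $f_1\in\bbR[x]$. Imposing $\{f,y_j\},\{f,x_j\}\in\g_{p-1}(\fn,\fs)\subseteq\bbR[x]$ and isolating the terms still containing $z$ (respectively $y$) then forces $f_1=0$ and $f_0\in\bbR[x]$. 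The low-degree base cases $p=1,2$, where $\g_{p-2}$ meets $\fn_{-1}$ and $\fs$ (which themselves contain $y$'s and $z$), have to be treated by hand; there it is precisely the conditions coming from bracketing with $F=\langle x_j\rangle$ that kill the surviving $z$- and $y$-components. This elimination of the $y$- and $z$-variables is the main obstacle and the only genuinely computational part of the argument.

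Once pure-$x$ is established, the prolongation condition simplifies drastically: for $f\in\bbR[x]$ one has $\partial_z f=0$ and $\partial_{y_j}f=0$, hence $\{f,x_j\}=0$ and $\{f,1\}=0$, so the only constraint is $\{f,y_j\}=\partial_{x_j}f\in\g_{p-1}(\fn,\fs)$. Therefore $\g_p(\fn,\fs)=\{f\in\Sym^{p+2}E^*:\partial_{x_j}f\in\g_{p-1}(\fn,\fs)\ \forall j\}$, with base case $\g_0\cap\bbR[x]=I_2(\cC)=I_2(\sigma_0\cC)$. It then remains to match this recursion with the secant ideals. The key algebraic input is the apolarity/catalecticant recursion $I_{r+2}(\sigma_r\cC)=\{f\in\Sym^{r+2}E^*:\partial_{x_j}f\in I_{r+1}(\sigma_{r-1}\cC)\ \forall j\}$, which follows from the description of $I_{r+2}(\sigma_r\cC)$ as the span of the maximal minors of the catalecticant matrix \eqref{Hankel}, since differentiating an $(r+2)$-minor and expanding produces $(r+1)$-minors of the smaller catalecticant (cf.~\cite{IK1999,LO2013}). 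Feeding this into the prolongation recursion and inducting yields $\g_p(\fn,\fs)=I_{p+2}(\sigma_p\cC)$ for every $p\ge 1$.

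Finally, the range is governed by the size of \eqref{Hankel}: the catalecticant attached to $\sigma_r\cC$ is $(r+2)\times(k-r)$, so it admits $(r+2)\times(r+2)$ minors — equivalently $\sigma_r\cC\subsetneq P^k$ and $I_{r+2}(\sigma_r\cC)\neq 0$ — exactly when $r+2\le k-r$, i.e. $r\le [k/2]-1$; for larger $r$ the secant variety fills $P^k$ and $I_{r+2}(\sigma_r\cC)=0$. Summing the nonzero graded pieces gives $\g(\fn,\fs)=(\fn\oplus\fs)\oplus\bigoplus_{p=1}^{[k/2]-1}I_{p+2}(\sigma_p\cC)=\g''+\sum_{r=1}^{[k/2]-1}I_{r+2}(\sigma_r\cC)$, which is the assertion. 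I expect the $y,z$-elimination of the second paragraph to be the crux, with the catalecticant recursion of the third paragraph being the decisive (but citable) algebraic ingredient.
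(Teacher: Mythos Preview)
Your overall architecture matches the paper's proof closely: realize $\g(\fn,\fs)$ inside the contact polynomial algebra, show that $\g_p(\fn,\fs)$ for $p\ge 1$ consists of pure-$x$ polynomials, and then identify the resulting recursion $\g_p=\{f:\partial_{x_j}f\in\g_{p-1}\}$ with the standard prolongation $I_2(\cC)^{(p)}$. Your catalecticant recursion is exactly the paper's Lemma~\ref{lem1}, and your inductive step for $p\ge 3$ (bracket with $1$ to bound the $z$-degree, then peel off $y$'s) is precisely what the paper does.

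The one place where your sketch underestimates the difficulty is the base case $p=1$. You write that ``bracketing with $F=\langle x_j\rangle$ kills the surviving $z$- and $y$-components'', but this heuristic breaks down there: the target $\g_0=\fs$ itself contains elements with $y$'s and $z$ (namely $X,H,Y,Z_1,Z_2$), so $\{f,x_j\}\in\fs$ does \emph{not} force $\partial_{y_j}f$ to be pure-$x$. The paper handles $p=1$ by a different and more structural argument: working with the abstract map $\phi\colon\fn\to\fn\oplus\fs$, it observes that the restriction of $\phi(F)$ to $F$ lands in the first standard prolongation of the irreducible $\gl(2,\bbR)\hookrightarrow\gl(F)$, which vanishes for $\dim F\ge 4$ by the Kobayashi--Nagano theorem. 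A symmetric argument on $E$ and then a rank-$\le 2$ obstruction inside $I_2(\cC)$ (no nonzero element of $I_2(\cC)$ has rank $\le 2$, from the Hankel description) finish off $\phi(N)=0$ and $\phi(\fn_{-1})\subset I_2(\cC)$. So either invoke Kobayashi--Nagano explicitly, or be prepared for a genuinely longer direct computation at $p=1$ than your outline suggests; the $p=2$ case, by contrast, goes exactly as you describe and matches the paper's treatment.
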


 \begin{proof}
First, recall that given vector spaces $V$ and $W$, and an arbitrary subspace $L$ of $\Hom(V,W)\cong V^*\otimes W$, the \emph {(standard) first prolongation $L^{(1)}$ of $L$} is a subspace in $\Hom(V, L)= V^*\otimes L$ such that:
\begin{equation}\label{standard1}
L^{(1)}=\{\phi\in \Hom(V, L):\phi(v_1)v_2 - \phi(v_2)v_1 = 0,\,\,\text{for all }v_1,v_2\in V\}
\end{equation}
or, shortly,
\begin{equation}
\label{standard1'}
L^{(1)}= \left(V^*\otimes L\right)\cap \left(\Sym^2 V^*\otimes W\right).
\end{equation}
Here we treat both spaces in the right hand side are subspaces of $V^*\otimes V^*\otimes W$. Then one can define the \emph{standard $i$-th prolongation} $L^{(i)}$ of $L$ recursively by 
\begin{equation}
\label{recstand}
L^{(i)}:=\left(L^{(i-1)}\right)^{(1)}, 
\end{equation}
which also equivalent to:
\begin{equation}
\label{standardk}
L^{(i)}= \left(\left(V^*\right)^{\otimes i}\right) \otimes L\cap\left( \Sym^{i+1} V^*\otimes W\right).
\end{equation}
\begin{rem}\label{derprolong}
By \eqref{standardk} the elements of $L^{(i)}$ can be identified with $W$-valued degree $i+1$ homogeneous polynomials on $V$ such that all their partial derivatives of order $k$ (in the  coordinate system on $V$ with respect to any basis) belong to $L$ (see~\cite{kob,stern}).
\end{rem}
   	
\begin{lem}\label{lem1}
The standard $r$-th  prolongation  $I_2(\cC)^{(r)}$ of $I_2(\cC)$, considered as the subspaces of $\Hom(E, F)\cong E\otimes E^*$, coincides with  $I_{r+2}(\sigma_r\cC)$.
\end{lem}
\begin{proof}
From the fact that $I_2(\cC)\subset \mathrm{Sym}^2(E^*)$ and Remark~\ref{derprolong} it follows immediately that $I_2(\cC)^{(r)}$ can be identified with  the space of polynomials of degree $r+2$ in variables $(x_0,\dots,x_k$) such that all their $r$-th partial derivatives belong to $I_2(\cC)$. Let us show that a polynomial $F$ satisfies this property, if and only if $F\in I_{r+2}(\sigma_r\cC)$.
	
Indeed, let $F$ be such polynomial and let $p_0,\dots,p_r$ be an arbitrary set of $r+1$ points on $\cC$. Consider the linear map defining the embedding of the corresponding secant space through the points $p_0,\dots,p_r$:
	\begin{equation}
	W\colon P^r\to P^k, \quad [z_0:z_1:\cdots:z_r]\mapsto
	z_0p_0+z_1p_1+\dots z_rp_r.
	\end{equation}
	Then $F\circ W$ is a polynomial of degree $r+2$ on $P^r$ that vanishes at $(r+1)$ points 
	\begin{equation}
	[1:0:\dots:0],[0:1:\dots:0],\dots, [0:0:\dots:1]\in P^r,
	\end{equation} 
	and, in addition, all its derivatives of degree $\le r$ also satisfy this property. This immediately implies that $F\circ W=0$. Hence, $F$ vanishes
	identically at $\sigma_r\cC$, and thus belongs to $I_{r+2}(\sigma_r\cC)$. Conversely, it is easy to see that if $F\in I_{r+2}(\sigma_r\cC)$ then all its first partial derivatives lie in
	$I_{r+1}(\sigma_{r-1}\cC)$. By induction, this implies that all $r$-th partial derivatives of $F$ belong to $I_2(\cC)$.
\end{proof}
	
By Lemma \ref{lem1}, to prove Theorem \ref{gns} it is sufficient to prove that 
\begin{equation}
\label{goalfin}
\g_i(\fn, \fs)=I_2(\cC)^{(i)}, \quad \forall i\geq 1
\end{equation}
We first prove \eqref{goalfin} for $i=1$, then for $i=2$, and then by induction for $i\geq 3$

{\bf 1.} \emph{ Proof of the identity \eqref{goalfin} for $i=1$.}

First, directly from definition of the Tanaka prolongation $\g_1(\fn, \fs)$ can be identified with the space of all degree $1$ maps $\phi\colon\fn\to\fn\oplus\fs$
such that
\begin{equation}
\label{mod1}
\phi(v_1)v_2 - \phi(v_2)v_1 = \phi([v_1, v_2]) ,\,\,\text{for all }v_1,v_2\in \fn.
\end{equation}
Taking into account \eqref{sigma}, the last relation yields 
\begin{equation}
\label{mod1'}
\phi(v_1)v_2 - \phi(v_2)v_1 = \sigma(v_1, v_2) \phi(N),\,\,\text{for all }v_1,v_2\in \fn_{-1}
\end{equation}
where, as before, $\sigma$ represents the canonical symplectic structure on $\fn_{-1}$ and $N$ generates $\fn_{-2}$.

Further, $I_2(\cC)^{(1)}$ can be naturally embedded into $\g_1(\fn, \fs)$. Indeed if $\phi\in I_2(\cC)^{(1)}\subset \mathrm{Hom}(E, F)$, then by extending it to be equal to zero to $F$ and $\langle N\rangle$ and using \eqref{standard1} we obtain an element of $\g_1(\fn,\fs)$, which defines the required embedding.
  	
To prove the opposite inclusion it is sufficient to prove that for an arbitrary  $\phi\in \g_1(\fn, \fs)$, one has $\phi(N)=0$ and
$\phi|_{\fn_{-1}}$ takes values in~$I_2(\cC)$. 

Now analyze \eqref{mod1'} separately in the following cases:

\textbf{Case 1:} $v_1,v_2\in F$. Since the subspace $F$ is Lagrangian with respect to $\sigma$ we get $\phi(v_1)v_2=\phi(v_2)v_1$, which is exactly 
the equation for the standard prolongation of the restriction of $\fs$ to
$F$. This restriction is equal to the image of the irreducible embedding of
$\gl(2,\mathbb R)$ into $\gl(F)$, because  the restriction of any element of $I_2(\cC)\subset\gl(\fn_{-1})$ to $F$ is zero.

According to the result of Kobayashi--Nagano~\cite{kobnag}, the first prolongation of the irreducible embedding of $\gl(2,\mathbb R)$ is non-zero only if the dimension of the representation space does not exceed~$3$. In our case the lowest possible dimension of $F$ is equal to $4$. Thus, we see that $\phi(v_1)v_2=0$ for all $v_1,v_2\in F$. In particular, this implies that $\phi(F)$ lies in $\mathbb R (Z_1-Z_2)+I_2(\cC)$ (here $Z_1$ and $Z_2$ are as in \eqref{scoord}), because the latter is exactly the subspace of all elements in $\fs$  for which the restriction to $F$ is equal to $0$.

\textbf{Case 2:} $v_1,v_2\in E$. As $E$ is also Lagrangian, we also get $\phi(v_1)v_2=\phi(v_2)v_1$. Considering this equation modulo $F$, we again get that $\phi(v_1)v_2 = 0 \mod F$. This implies that $\phi(E)\subset \mathbb R(Z_1+Z_2)+I_2(\cC)$.

Cases 1 and 2 above imply that $\phi$ can be decomposed as follows:
\begin{equation}\label{phipr}
\begin{split}
&\phi(f+e)=\bar\phi(f+ve) + \alpha(f)(Z_1-Z_2)/2 +
\beta(e)(Z_1+Z_2)/2,\\
&\forall f\in F, e\in E,
\end{split}
\end{equation}
where $\bar\phi$ takes values in $I_2(\cC)$, $\alpha\in F^*$ and $\beta\in E^*$.

\textbf{Case 3:} $v_1= f\in F$, $v_2=e\in E$. We have:
\[
\phi(f)e - \phi(e)f = \sigma(f,e) \phi(N).
\]
Considering this equation modulo $F$ and taking into account that
$I_2(\cC)$ vanishes on $F$ and sends $E$ to $F$, we get:
\[
\alpha(f)e=\sigma(f,e)N \mod \,F, \quad\text{for any }f\in F,
e\in E.
\]
As the dimensions of $E$ and $F$ are at least 4, for any $f$ we can find
a non-zero vector $e\in E$ such that $\sigma(f,e)=0$. Hence, we see
that $\alpha(f)=0$. In particular, this implies that $\phi(N)\in F$.

Let us now fix an arbitrary $f\in F$. From~\eqref{phipr} we have
$\phi(f)=\bar\phi(f)$. Let us prove that $\bar\phi(f)=0$. Indeed, we have
\[
\bar\phi(f)e = \sigma(f,e)\phi(N) + \beta(e)f, \quad\text{for any }e\in
E.
\]
The element $\bar\phi(f)$ lies in $I_2(\cC)$, while the right-hand side defines a certain linear map from $E$ to $F$, which has rank $\le 2$. However, from the description of $I_2(\cC)$ as maximal minors of the Hankel  matrix \eqref{Hankel2} it follows that  the space $I_2(\cC)$ does not contain any non-zero elements of rank $2$ or less. Thus, we get $\bar\phi(f)=0$ and $\phi(N)$ is proportional to $f$. As $f$ can be arbitrary and the dimension
of the subspace $F$ is at least $4$, this implies that $\phi(N)=0$ and $\beta=0$.
		
Hence, $\phi$ takes values in $I_2(\cC)$ and, thus, $\g_1(\fn,\fs)$ coincides with $I_2(\cC)^{(1)}$, as desired.

\medskip
{\bf 2.}\emph{ Proof of the identity \eqref{goalfin} for $i=2$.}
Here we again consider $\g(\fn, \fs)$ as a subalgebra of the algebra of contact vector fields or the algebra of polynomials with respect to the brackets given by \eqref{bracketdef}. Denote $\mathbf{x}=(x_0, \ldots, x_k)$, $\mathbf{y}=(y_0, \ldots, y_k)$. In this description $\g_i(\fn, \fs)$ can be recursively identified with the space of homogeneous polynomials $\g(\mathbf{x},\mathbf{y},z)$ of weighted degree $i+2\ge 4$, where the weight of variables $x_i$ and  $y_i$  is $1$ and the weight of $z$ is $2$, such that the polynomials $\{x_i, g\}$ and $\{y_i, g\}$ belong to $\g_{i-1}(\fn, \fs)$ for all $i=0, \ldots k$.

For $i=2$ a homogeneous polynomial $g$  of weighted degree $4$ has the form
\begin{equation}
\label{deg4}
g=Q_0(\mathbf{x}, \mathbf{y})+Q_1(\mathbf{x},\mathbf{y})z+Q_2z^2,
\end{equation}
where $Q_2$ is constant, $Q_1(\mathbf{x}, \mathbf{y})$ is a quadratic polynomial, and $Q_0(\mathbf{x}, \mathbf{y})$ is homogeneous polynomial of degree $4$ and by part 1 we have $g\in \g_2(\fn, \fs)$ if and only if
\begin{equation}
\label{brackcond1}
\{x_j, g\}\in I_2(\cC)^{(1)}, \quad \{y_j, g\}\in I_2(\cC)^{(1)}, \quad \forall j=0, \ldots k
\end{equation}
From \eqref{bracketdef} and \eqref{dd} it is easy to show that 
\begin{equation}
\label{brackxy}
\{x_j, g\}=\cfrac{\partial g}{\partial y_j}+\cfrac{x_j}{2}\cfrac{\partial g}{\partial z} , \quad \{y_j, g\}=-\cfrac{\partial g}{\partial x_j}+\cfrac{y_j}{2}\cfrac{\partial g}{\partial z}, \quad \forall j=0, \ldots k.
\end{equation}
Applying \eqref{brackxy} to $g$ as in \eqref{deg4} we get:
\begin{equation}
\begin{aligned}
\{x_j, g\}=\cfrac{\partial Q_0}{\partial y_j}+\cfrac{\partial Q_1}{\partial y_j}z+\cfrac{x_j}{2}\left(Q_1+2 Q_2z\right) =\\ \left(\cfrac{\partial Q_0}{\partial y_j} +\cfrac{x_j}{2}Q_1 \right)+\left(\cfrac{\partial Q_1}{\partial y_j}+Q_2 x_j\right)z, \label{brackx}
\end{aligned}
\end{equation}
\begin{equation}
\begin{aligned}
\{y_j, g\}=-\cfrac{\partial Q_0}{\partial x_j}-\cfrac{\partial Q_1}{\partial x_j}z+\cfrac{y_j}{2}\left(Q_1+2 Q_2z\right) =\\ \left(-\cfrac{\partial Q_0}{\partial x_j} +\cfrac{y_j}{2}Q_1 \right)+\left(-\cfrac{\partial Q_1}{\partial x_j}+Q_2 x_j\right)z. \label{bracky}
\end{aligned}
\end{equation}
Recall that all elements of $I_2(\cC)^{(1)}$ are polynomials in $\mathbf{x}$. Therefore from \eqref{brackcond1} it follows that the polynomial factors  near $z$ in the last two equations must vanish, i.e., 
\begin{equation}
\label{r1} 
\left\{\begin{array}{l}
\cfrac{\partial Q_1}{\partial y_j}+Q_2 x_j=0\\
-\cfrac{\partial Q_1}{\partial x_j}+Q_2 y_j=0
\end{array}\right.
\end{equation}
Differentiating the first equation in \eqref{r1}  with respect to $x_j$ and the second equation with respect to  $y_j$, summing up the results,  and using that $Q_2$ is constant, we get that $Q_2=0$.
This in turn implies that   $\cfrac{\partial Q_1}{\partial x_j}=\cfrac{\partial Q_1}{\partial y_j}=0$ for every  $j=0, \ldots k$. Since $Q_1$ is a quadratic polynomial, we get that $Q_1=0$.

Substituting $Q_1=Q_2=0$ into \eqref{brackx} and \eqref{brackx} -\eqref{bracky}
\begin{equation}	
\label{inprol2}
\cfrac{\partial Q_0}{\partial x_j}, \cfrac{\partial Q_0}{\partial y_j} \in I_2(\cC)^{(1)}\quad\text{for any }j=0,\dots,k.
\end{equation}

 Hence, first, $Q_0$ is independent of $\mathbf{y}$ and second, using Remark \ref{derprolong}, we get that $g=Q_0(\mathbf{x})$ belongs to $I_2(\cC)^{(2)}$. So, we proved that $\g_2(\fn, \fs)\subset  I_2(\cC)^{(2)}$. The opposite inclusion is obvious, so we proved \eqref{goalfin} for $i=2$.
\medskip

{\bf 3.}\emph{ Proof of the identity \eqref{goalfin} for $i\geq 3$.}
We make the proof by induction. Assume  that \eqref{goalfin} holds for all $1\leq r<i$  and prove it for $i$. If $g\in \g_{i}(\fn,\fs)$, then 
\begin{equation}
\label{degi+2}
g=\sum_{r=0}^{[\frac{i}{2}]+1}Q_r(\mathbf{x}, \mathbf{y})z^r,
\end{equation}
where $Q_r(\mathbf{x}, \mathbf{y})$ are homogeneous polynomials of degree $i+2-2r$. Further, note that 
the constant polynomial $1$ has weight $-2$ in algebra $\g(\fn)$. Therefore using \eqref{bracketdef}, the grading condition in the algebra $\g(\fn)$, and he induction hypothesis we get that
\[
	\{1, g\}=\frac{\partial g}{\partial z} \in \g_{i-2}(\fn, \fs)=I_2(\cC)^{(i-2)}
\]
In particular,  $\frac{\partial g}{\partial z}$ depends on $x$ only. Consequently $g$ has the form
\begin{equation}
\label{degi+2s}
g=Q_0(\mathbf{x}, \mathbf{y})+Q_1(\mathbf{x})z
\end{equation}
From this and \eqref{brackxy} it follows that for every $j=0, \ldots, k$
\begin{eqnarray}
&~&\{x_j, g\}=\cfrac{\partial Q_0}{\partial y_j}+\cfrac{x_j}{2}Q_1,  \label{brackxi} \\
&~&\{y_j, g\}=-\cfrac{\partial Q_0}{\partial x_j}-\cfrac{\partial Q_1}{\partial x_j}z+\cfrac{y_j}{2}Q_1 \label{brackyi}.\\
\end{eqnarray}
Then from \eqref{brackyi}, using that  $\{y_j, g\} \in  \g_{i-2}(\fn,\fs)=I_2(\cC)^{(i-1)}$ by the induction hypothesis, and in particular that $\{y_j, g\}$ depends on $\mathbf{x}$ only, we get that   $\cfrac{\partial Q_1}{\partial x_j}=0$ for every $j=0, \ldots, k$ and so $Q_1=0$. Consequently, from this and formula \eqref{brackxi} it follows that 
\begin{equation}
\label{inproli}
\cfrac{\partial Q_0}{\partial x_j}, \cfrac{\partial Q_0}{\partial y_j}\in I_2(\cC)^{(i-1)}.
\end{equation}

Using the same arguments as after relation \eqref{inprol2},  we conclude that $g=Q_0(\mathbf{x})$ belongs to $I_2(\cC)^{(i)}$. So, we proved that $\g_{i}(\fn, \fs)\subset  I_2(\cC)^{(i)}$. The opposite inclusion is obvious, so we proved \eqref{goalfin} for $i$, which concludes our proof of \eqref{goalfin} by induction and therefore, by Lemma \ref{lem1},  the proof of Theorem \ref{gns}.
\end{proof}
 
\section{End of Proof of Theorem \ref{counterthm}: the same Tanaka prolongation for different gradings}
\label{endproof}
\begin{prop}
	\label{propequal}
	For $k\geq 3$ the Tanaka prolongation $\g(\m)$ of the graded nilpotent Lie algebra $\m$ coincides (as abstract Lie algebra) with the Tanaka prolongation $\g(\fn,\fs)$ of the non-positively graded Lie algebra $\fn\oplus\fs$.  
\end{prop}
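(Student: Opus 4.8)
The plan is to prove the two inclusions $\g(\fn,\fs)\subseteq\g(\m)$ and $\g(\m)\subseteq\g(\fn,\fs)$ separately, each time by exhibiting one prolongation as a \emph{partial} Tanaka prolongation of the defining algebra of the other and then invoking maximality. The mechanism linking the two is that both gradings are inner and compatible: the ``first'' grading (inherited from $\fn\subset\g'$) and the ``second'' (Heisenberg) grading are implemented by the adjoint action of two commuting semisimple elements $P,H_2\in\fs\subseteq\g_0(\fn)$, and their difference $P-H_2$ lies in the reductive part $\gl(2,\bbR)\oplus\langle\Id\rangle$ of $\fs$. Consequently the two gradings are related by $d_1=d_2+d_0$, where $d_0$ is the integer grading given by $\ad(P-H_2)$; in particular each grading restricts to any subalgebra containing $\fs$, and an element is first-homogeneous iff it is simultaneously second- and $d_0$-homogeneous. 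I would begin by recording these facts together with the explicit forms of $P$ and $H_2$.

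First inclusion. I would take $\mathfrak a:=\g(\fn,\fs)$, already identified in Theorem \ref{gns} as $\g''+\sum_{r=1}^{[k/2]-1}I_{r+2}(\sigma_r\cC)\subseteq\g(\fn)$, and re-grade it by the first grading $\ad P$. The claim is that its first-negative part equals $\m$. For the summand $\g''=\fs\ltimes\fn$ this is immediate: the $\fn$-part contributes exactly $\m\cap\fn$ by the degree bookkeeping of \eqref{grad1}--\eqref{grad2}, while the $\fs$-part contributes exactly $\langle \ad X\rangle$ by Proposition \ref{prop1}. For the remaining secant summands a short weight computation (using Lemma \ref{lem+} as the base case and that $\ad X$ lowers $d_1$ by one) shows they are concentrated in non-negative first degree. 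Since non-degeneracy is inherited, $\mathfrak a$ is a partial Tanaka prolongation of $\m$ with respect to the first grading, and maximality of $\g(\m)$ yields $\g(\fn,\fs)\subseteq\g(\m)$.

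Second inclusion and the main obstacle. By the first inclusion $\fn\oplus\fs\subseteq\g(\m)$ with the correct second-degrees, and $H_2\in\fs$ commutes with $P$, so $H_2\in\g_0(\m)$ (first grading) and $\ad H_2$ defines the second grading on all of $\g(\m)$. The goal is to show that the non-positive second-degree part of $\g(\m)$ is exactly $\fn\oplus\fs$; granting this, $\g(\m)$ is a partial Tanaka prolongation of the non-positively graded algebra $\fn\oplus\fs$ (in the sense recalled at the start of Section \ref{secantsec}), whence $\g(\m)\subseteq\g(\fn,\fs)$ by maximality of the latter, and the two algebras coincide as subalgebras of $\g(\fn)$, hence as abstract Lie algebras. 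The containment $\fn\oplus\fs\subseteq\g_{\le0}^{(2)}(\m)$ is clear; the reverse---that the first-grading prolongation introduces no new elements of non-positive second degree---is the crux. I expect to handle it by reducing to the two identities $\g_0^{(2)}(\m)=\fs$ and $\g_{<0}^{(2)}(\m)=\fn$: the former by characterizing $\g_0^{(2)}(\m)$ as a first-graded subalgebra of $\csp(\fn_{-1})$ whose first-negative part is $\langle\ad X\rangle$ and invoking Proposition \ref{prop1} once more, and the latter from the fact that $\m$ is generated in first-degree $-1$ and the only generators of negative second-degree lie in $\m\cap\fn$. This control of the non-positive second-degree part of $\g(\m)$ is the step I expect to be most delicate.
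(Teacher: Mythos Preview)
Your overall strategy---prove each inclusion by exhibiting one prolongation as a partial Tanaka prolongation for the other grading and then invoke maximality---is exactly the paper's, and your treatment of the first inclusion $\g(\fn,\fs)\subseteq\g(\m)$ matches the paper's: regrade $\g(\fn,\fs)$ by the $\m$-grading using the explicit description from Theorem~\ref{gns}, check that the negative part is $\m$, and note non-degeneracy.

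For the reverse inclusion your route diverges. The paper does \emph{not} compute the second-graded pieces of $\g(\m)$ directly. Instead it produces a complement $\g^0$ to $\fn$ inside $\g(\m)$ by a geometric argument: let $\fh\subset\g^0(\m)$ be the stabilizer of the hyperplane $\langle X,E_1,\dots,E_k,F_{k-1},F_k\rangle$ in $\m\cong\g(\m)/\g^0(\m)$; a dimension count on the homogeneous space $G/G^0$ then gives $\g'\oplus\fh=\g(\m)$, so $\g^0:=\fh\oplus\langle X\rangle$ complements $\fn$. Properties (P1)--(P3) are checked for $\g^0$, with (P3) reducing to Proposition~\ref{prop1} just as you propose.

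Your alternative---show directly that $\g^{(2)}_{<0}(\m)=\fn$ and $\g^{(2)}_0(\m)=\fs$---is workable, but the one-line justification you give for the first identity is not a proof. That $\m$ is generated in first-degree $-1$ says nothing about elements of $\g(\m)$ of first-degree $\ge 0$ which might have negative second-degree, and those are precisely what you must rule out. The argument can be completed: induction on first-degree yields $\g^{(2)}_{\le-3}(\m)=0$ and $\g^{(2)}_{-2}(\m)=\langle N\rangle$ fairly quickly; for $\g^{(2)}_{-1}(\m)=\fn_{-1}$ one shows that any bi-homogeneous $u$ of second-degree $-1$ and first-degree $i\ge0$ has $[u,E_1]=[u,F_{k-1}]=0$, so $u\mapsto[u,X]$ is injective, and then the relations $[X,F_k]=0$ and $[X,E_k]=0$ in $\m$ force the dimensions down to those of $\fn_{-1}\cap\g_i(\m)$. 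You also need second-grading non-degeneracy (if $u\in\g^{(2)}_{\ge0}(\m)$ has $[u,\fn]=0$ then $u=0$) before you can invoke maximality of $\g(\fn,\fs)$; this is the paper's property~(P2), proved there by a minimal-first-degree argument, and it does not follow automatically from first-grading non-degeneracy. In short, the plan is sound but the crux step is asserted rather than argued, and the paper's stabilizer/dimension-count is what fills exactly that gap.
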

\begin{rem}
	Note that these two Lie algebras $\g(\m)$ and $\g(\fn,\fs)$ have different gradings, and the proposition states that they only coincide as abstract Lie algebras.  In fact, as we shall see in the proof, they also coincide as bi-graded Lie algebras, if we take into account both gradings. 
\end{rem}

\begin{proof}
	Consider first the Lie algebra $\g(\fn,\fs)$ and equip it with the second grading inherited from the grading of $\g'$ as specified in~\eqref{grad1} and \eqref{grad2}. This turns $\g(\fn,\fs)$ into a bi-graded Lie algebra:
	\[
	  \g(\fn,\fs) = \bigoplus_{i,j} \g_{i,j}(\fn,\fs).
	\]
	Identifying $\g(\fn,\fs)$ with the polynomials in $\mathbf{x}, \mathbf{y}, z$ as given in the previous section and in Theorem~\ref{gns} and taking into account the grading of $\g'$ given by \eqref{firstP}-\eqref{grad2} and the identification~\eqref{EFxy}, we get that $\g_{\bullet,j}(\fn,\fs)$ is spanned by homogeneous polynomials of weighted degree $j+2$, where the weight of $x_i$ is equal to $i$, the weight of $y_i$ is equal to $2-i$, and the weight of $z$ is $2$.

	Let us now prove that $\g(\fn,\fs)\subset \g(\m)$. In order to do this, it is enough to show that 
		\begin{enumerate}
		\item $\g_{\bullet,-}(\fn,\fs)=\m$; 
		\item if $u\in\g_{\bullet,j}(\fn,\fs)$, $j\ge 0$ is any homogeneous element and $[u,\m]=0$, then $u=0$.
	\end{enumerate}

Both statements immediately follow from the explicit description of $\g(\fn,\fs)$ given in Theorem~\ref{gns}. With the above 
rule of determining the second grading of $\g(\fn,\fs)$, the elements $H$, $Z_1$, and $Z_2$ from~\eqref{scoord} belong to $\g_{0,0}(\fn,\fs)$, and $Y$ belongs to $\g_{1,1}(\fn,\fs)$. Finally, all the spaces $I_{i+2}(\sigma_i\cC)$, $i\ge 0$ are spanned by maximal minors of Hankel matrices~\eqref{Hankel}, which are weighted homogeneous polynomials in $\mathbf{x}$ of weighted degree $\ge 2$, so that they belong to $\g_{r,j}(\fn,\fs)$ with $j\ge 0$. This proves property~(1) above. Property (2) follows from \eqref{EFxy}, \eqref{brackxy}, and the fact that all elements of non-negative degree correspond to nonconstant polynomials.

	\medskip
	Thus, $\g(\fn,\fs)$ is naturally embedded into $\g(\m)$. Let us prove the opposite embedding. From the second partial prolongation $\g''$ we know that $\oplus_{i\le 0}\g_i(\fn,\fs) = \fn\oplus\fs$ is naturally embedded to $\g(\m)$. Let us construct a graded subalgebra $\g^0\subset \g(\m)$ which satisfies the following three properties: 
	\begin{itemize}
		\item[(P1)] $\g(\m)=\fn\oplus \g^0$;
		\item[(P2)] for any $u\in\g^0$ such that $[u,\fn]=0$ we have $u=0$; 
		\item[(P3)] if $u\in\g^0$ satisfies $[u,\fn_{-i}]\subset \fn_{-i}$ for $i=1,2$, then $u \subset \fs$.
	\end{itemize}
	By definition of $\g(\fn, \fs)$, this would guarantee that $\g(\m)$ is embedded into $\g(\fn,\fs)$ and would complete the proof.
	
	For any integer $l$ define $\g^l(\m)=\oplus_{i\ge l}\g_i(\m)$. Let 
	\begin{equation}
	\label{V0}
	V_0 =\langle X, E_1,\dots,E_k, F_{k-1}, F_k \rangle + \g^0(\m).
	\end{equation}
	Note that the by~\eqref{grad1} the subspace~$V_0$ is a hyperplane in $\g^0(\m)$ and
    	\begin{equation}
    	\label{V0N}
    	\g^0(\m) =V_0\oplus\langle N\rangle.
    \end{equation}
	Consider an isotropy action of the subalgebra $\g^0(\m)$ on $\g(\m)/\g^0(\m)$ and let $\fh$ be the subalgebra of $\g^0(\m)$ that stabilizes the subspace $V_0$.
	
	Note that the first partial prolongation $\g'$ (see~\eqref{firstP}) is embedded into $\g(\m)$ by definition of the Tanaka prolongation. We see that
	\begin{equation}
	 \g'\cap \g^0(\m) = \langle E_0; F_0, \dots, F_{k-2} \rangle.
	\end{equation}
	As we have $[E_0,F_k] = N$ and $[F_i, E_{k-i}] = \pm N$ for $k=0,\dots,k-2$, we see from~\eqref{V0} and~\eqref{V0N} that $\g'\cap \fh = \{0\}$.
	\begin{lem}
	We have $\g'\oplus\fh = \g(\m)$.
	\end{lem}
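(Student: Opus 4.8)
The plan is to reduce the statement to the non-negative part of the grading and then settle it by a dimension count. Write $\g'=\m\oplus\g'_{\ge 0}$, where $\g'_{\ge 0}=\langle E_0,F_0,\dots,F_{k-2}\rangle$ is the degree-$\ge 0$ part of $\g'$ (of dimension $k$, since $\deg E_0=0$ and $\deg F_i=k-2-i\ge 0$ exactly for $i\le k-2$), and recall $\g(\m)=\m\oplus\g^0(\m)$ with $\m=\g_{<0}(\m)\subset\g'$ and $\fh\subset\g^0(\m)$. Because every element of $\fh$ lies in $\g^0(\m)$, we have $\g'\cap\fh=\g'_{\ge 0}\cap\fh$, so the identity $\g'\cap\fh=\{0\}$ already proved is the same as $\g'_{\ge 0}\cap\fh=\{0\}$; and $\g'\oplus\fh=\g(\m)$ is equivalent to $\g'_{\ge 0}\oplus\fh=\g^0(\m)$. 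Thus it remains to prove this last equality inside $\g^0(\m)$.

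Next I would realize $\fh$ as the kernel of an explicit linear map. Let $\bar V_0=V_0/\g^0(\m)\subset\m$ be the hyperplane spanned by $X,E_1,\dots,E_k,F_{k-1},F_k$, i.e.\ the complement of $\langle N\rangle$ in $\m$. Since $\fh$ is by definition the stabilizer of this hyperplane under the isotropy action of $\g^0(\m)$ on $\g(\m)/\g^0(\m)=\m$, it is precisely the kernel of
\begin{equation}
\Phi\colon \g^0(\m)\longrightarrow \bar V_0^{*},\qquad \Phi(u)(v)=\big([u,v]\big)_N\quad(v\in\bar V_0),
\end{equation}
where $(\cdot)_N$ is the component along $N\in\g_{-2}(\m)$ (this is well defined, as $N$ spans a graded line and the $N$-component of $[u,v]$ is unaffected by reduction modulo $\g^0(\m)$). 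Hence $\dim\fh=\dim\g^0(\m)-\operatorname{rank}\Phi$, and the equality $\g'_{\ge 0}\oplus\fh=\g^0(\m)$ will follow once I show $\operatorname{rank}\Phi=k$ together with $\Phi(\g'_{\ge 0})=\operatorname{im}\Phi$.

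The crux is a graded bound on $\operatorname{im}\Phi$. For $u\in\g_l(\m)$ with $l\ge 0$ and $v\in\g_{-i}(\m)\cap\bar V_0$ one has $[u,v]\in\g_{l-i}(\m)$, so its $N$-component (sitting in degree $-2$) can be non-zero only when $i=l+2$; thus $\Phi(u)$ is supported on $\g_{-(l+2)}(\m)\cap\bar V_0$. Running over $l\ge 0$ these supports are $\langle E_2,F_k\rangle$ for $l=0$, the lines $\langle E_{l+2}\rangle$ for $1\le l\le k-2$, and $0$ for $l\ge k-1$; lying in pairwise distinct degrees they are complementary summands of $\bar V_0$, whence
\begin{equation}
\operatorname{im}\Phi\subseteq \langle E_2,F_k\rangle^{*}\oplus\bigoplus_{i=3}^{k}\langle E_i\rangle^{*},
\end{equation}
a space of dimension $2+(k-2)=k$, so $\operatorname{rank}\Phi\le k$. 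On the other hand $\g'_{\ge 0}\cap\fh=\{0\}$ says $\Phi|_{\g'_{\ge 0}}$ is injective, giving $\operatorname{rank}\Phi\ge\dim\g'_{\ge 0}=k$. Hence $\operatorname{rank}\Phi=k$ and $\Phi(\g'_{\ge 0})=\operatorname{im}\Phi$, which yields $\g'_{\ge 0}\oplus\fh=\g^0(\m)$ and, by the first paragraph, $\g'\oplus\fh=\g(\m)$.

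The main obstacle is exactly the graded support analysis of the third paragraph: one must check carefully that only the pieces $\g_{-i}(\m)\cap\bar V_0$ with $2\le i\le k$ can produce an $N$-component and that they total precisely $k$ dimensions. This is where the specific shape of $\m$ enters — two-dimensional in degree $-2$ after deleting $N$ and one-dimensional in each of the degrees $-3,\dots,-k$ — and it is matched on the nose by $\dim\g'_{\ge 0}=k$ together with the already established triviality of $\g'\cap\fh$.
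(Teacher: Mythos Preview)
Your argument is correct. It differs from the paper's proof in its framing: the paper passes to the simply connected group $G$, realizes $\fh$ as the Lie algebra of the stabilizer $H\subset G^0$ of the hyperplane $V_0/\g^0(\m)$, observes that $\m_{-1}=\g^{-1}(\m)/\g^0(\m)$ is $G^0$-stable, and hence that $H$ is really the stabilizer of a hyperplane in the $(k+1)$-dimensional space $\g(\m)/\g^{-1}(\m)$; an orbit bound in the corresponding projective space then gives $\operatorname{codim}_{\g^0(\m)}\fh\le k$, and the already established $\g'\cap\fh=0$ together with $\dim\g'=2k+4$ finishes the count. You obtain the same codimension bound by a purely linear-algebraic route: writing $\fh=\ker\Phi$ for the explicit map $\Phi(u)(v)=([u,v])_N$ and using the grading to see that $\Phi(\g_l(\m))$ is supported on $(\g_{-(l+2)}(\m)\cap\bar V_0)^*$, whence $\operatorname{rank}\Phi\le 2+(k-2)=k$. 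The two observations are the same in content---your support statement that $\Phi$ vanishes on $\m_{-1}$ is exactly the paper's ``$\m_{-1}$ is stable'' reduction---but your version avoids Lie groups and the orbit-dimension argument entirely, at the price of a small graded bookkeeping. Either way the crux is the match between $\operatorname{codim}\fh\le k$ and $\dim\g'_{\ge 0}=k$ with $\g'_{\ge 0}\cap\fh=0$.
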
 
    \begin{proof}
    	Let $G$ be the simply connected Lie group that corresponds to the Lie algebra $\g(\m)$, and let $G^0$ be the connected subgroup in $G$ that corresponds to the subalgebra $\g^{0}(\m)$. Consider the homogeneous space $M=G/G^0$. Its tangent space $T_oM$ at the point $o=eG^0$ can be naturally identified with $\g(\m)/\g^0(\m)$. Let $H$ be the subgroup in $G^0$ that stabilizes the hyperplane $V_0/\g^0(\m)$ in $T_oM$. Then it is clear that $\fh$ is exactly the subalgebra that corresponds to this subgroup $H$. 
    	
    	Note that $V_0$ contains the subspace
    	\begin{equation}\g^{-1}(\m)/\g^0(\m) = (\m_{-1}+\g^{0}(\m))/\g^{0}(\m),
    	\end{equation}
    	which, by definition of $\g(\m)$, is stable with respect to the action of 
    	$\g^0(\m)$ and hence $G^0$. So, $H$ is essentially the stabilizer of a fixed hyperplane in $\g(\m)/\g^{-1}(\m)$ or, equivalently, a fixed  point in the projectivization $\big(\g(\m)/\g^{-1}(\m)\big)^*$. Since $\dim \, \g(\m)/\g^{-1}(\m)=k$, the codimension of $H$ in $G^0$ is at most $k$ and its codimension in $G$ is at most $2k+4$. But we have $\dim \g'=2k+4$. Since $\g'\cap \fh = \{0\}$, we see that $\g'\oplus\fh = \g(\m)$.    
    \end{proof}

	Recall that $\fn = \langle E_0,\ldots, E_{k+1},  F_0,\ldots, F_{k+1},  N \rangle$ is a subalgebra in $\g'$ and hence is contained in $\g(\m)$. Moreover $\g'= \fn \oplus \langle X \rangle$. 
	
	Hence, we see that 
	\begin{equation}\label{g0split}
	\g^0 = \fh\oplus \langle X\rangle
	\end{equation}
is indeed a subalgebra in $\g(\m)$ complementary to $\fn$, i.e. it satisfies property~(P1) above.
	
	Let us prove that $\g^0$ satisfies property~(P2).  Indeed, let $u\in \g^0$ and $[u, \fn]=0$. 
	
	First, prove that $u\in \fh$. Indeed, by \eqref{g0split} we can assume that $u=h+c X$ for some $h\in \fh$ and $c\in\mathbb R$. This implies that $0=[h+cX, E_1]=[h, E_1]+c E_2$,  i.e. $[h, E_1]=-c E_2$. Since 
	
\begin{equation}
\label{hsub}
	\fh\subset \g^0(\m),
\end{equation}	 the latter equality implies that $c=0$, so $u\in \fh$.
	
	Hence, by inclusion~\eqref{hsub} , Property (2) of Definition \ref{Tanakadef} of the Tanaka prolongation, and the fact that $\m$ is generated by $\m_{-1}$ it follows that in order to prove that $u=0$, it is sufficient to prove that $[u, \m_{-1}]=0$. Recalling that $\m_{-1}=\langle X, E_1, F_{k-1}\rangle$ and the fact that $\{E_1, F_{k-1}\} \subset \fn$,  it remains to prove that $[u, X]=0$. Assuming that it is not the case, let $u$ be a nonzero element of $\fh$ of minimal weight (with respect to the grading in $\g(\m)$) such that $[u,\fn]=0$. Since $[X, \fn]\subset \fn$ this will imply that $[[u, X], \fn]=0$. Note that by assumptions $[u, X]\neq 0$ and it has the weight one less than $u$, which contradicts the minimality of the weight of $u$ and completes the proof of property~(P2).
	
	Now  prove property~(P3). Obviously,  the space of all $\rm{ad}\, u$, $u\in \g^0$, such that $[u, \fn_{-i}]\subset \fn_{-i}$ is a subalgebra of $\mathfrak{csp}(\fn_{-1})$. Also, it contains $\rm{ad}\, X$ and from the splitting \eqref{g0split} and the fact that $\fh\in \g^0$ it follow that the space $ \langle \rm{ad}\, X \rangle $ coincides with the negative graded part of this subalgebra. This together with the maximality of the algebra $\mathfrak s$ implies property~(P3) and completes the proof of Proposition \ref{propequal}.
\end{proof}
Finally, using the explicit description of the spaces $I_{r+2}(\sigma_r\cC)$ in terms of maximal minors of Hankel matrices, we get that $\dim I_{r+2}(\sigma_r\cC)=\binom{k-r}{r+2}$, i.e. it is equal to the number  of maximal minors in matrix~\eqref{Hankel}. Further, note that $\dim \g'=2k+4$ and so 
\[
\dim \g''=2k+8+\dim I_{2}(\sigma_r\cC)=2k+8=k+6+\sum_{i=0}^2  \binom{k+2-i}{i}.
\]
From the last formula, Theorem~\ref{gns}, and Proposition \ref{propequal} it follows that   
\begin{equation}
\label{finequal1}
\dim \g(\m) = \dim\g(\fn,\fs) = k+6+\sum_{i=0}^{\lfloor k/2\rfloor+1} \binom{k+2-i}{i}   
\end{equation}
Now recall that 
\[
	\sum_{i=0}^{\lfloor k/2\rfloor+1} \binom{k+2-i}{i}=\Fib_{k+3},
\] because $\Fib_n$ can be seen as the number of ordered integer partitions (i.e. compositions) of $n-1$ by  $1$'s and $2$'s. And the number of such compositions with summands $2$ appearing exactly $i$ times is equal to $\binom{n-1-i}{i}$. This together with~\eqref{finequal1} yields
\begin{equation}
\label{finequal2}
\dim \g(\m) = \dim\g(\fn,\fs) = k+6+\Fib_{k+3}.   
\end{equation}
and hence identity \eqref{mainid1}.

Now let us prove the identity \eqref{mainid2}. As in the proof of Proposition~\ref{propequal}, identify $\g_i(\m)$ with the space of homogeneous polynomials in $\mathbf{x}$,  $\mathbf{y}$, $z$ of weighted degree $i+2$, where the weight of the variable $x_i$ is equal to $i$, the weight of the variable $y_i$ is equal to $2-i$, and the weight of variable of $z$ is equal to $2$.  Note that with this rule all elements of $\g_i(\m)$ with $i\geq 2$ are weighted homogeneous polynomials in $x_0,x_1,\dots,x_k$ of weighted degree $i+2$. They are spanned by maximal minors of Hankel matrices \eqref{Hankel}. As we will see in the next paragraph, for $k\geq 3$ the maximal weighted degree of such minors is greater or equal to $4$. Hence, the maximal positive nonzero $\nu$ for which $\g_\nu(\m)\neq 0$ is equal to this maximal weighted degree minus $2$. 

Consider the case of even and odd $k$ separately. If $k\geq 4$ is even, then the minor with maximal weighted degree corresponds to the determinant of the square Hankel matrix \eqref{Hankel} with $r=\cfrac{k}{2}-1$ and it has the weighted degree equal to $2+4+\ldots k=\cfrac{(k+2)k}{4}=\left\lfloor\cfrac{(k+1)^2}{4}\right\rfloor\ge 6$, which implies \eqref{mainid2} for this case. If $k\geq 3$ is odd, then the minor with maximal weighted degree corresponds to the minor of the Hankel matrix \eqref{Hankel} with $r=\cfrac{k-1}{2}-1$, obtained by removing the first column. It has weighted degree $1+3+\ldots +k=\cfrac{(k+1)^2}{4}\ge 4$, which also implies \eqref{mainid2} for this case.

This completes the proof of Theorem~\ref{counterthm}.

\section{Geometry behind our models}
\label{geomsec}

In this section we briefly describe the original geometric constructions that led to our models (for more detail, see \cite{DZ2016}). 
The key point is that, with any bracket generating rank~$3$ distribution $D$ on a $(k+4)$-dimensional manifold $M$ with $\dim D^2=D+[D, D]$ of rank~$6$ one can naturally associate another geometric structure consisting of a contact distribution $\Delta$ on a $(2k+3)$-dimensional manifold $\Sigma$ and a fixed curve of symplectic flags in each $\Delta_{\lambda}$, $\lambda\in \Sigma$. The algebras of infinitesimal symmetries of the original rank $3$ distribution and the new structure are isomorphic. 

In the case of the flat rank $3$ distribution $D_\m$ with the Tanaka symbol $\m$ as in \eqref{grad1} this curve of the symplectic flags on each fiber of $\Delta$ coincides, up to a symplectic transformation, with the curve $\mathcal F_X$ defined in Remark \ref{flatcurve}. Taking into account that the Tanaka prolongation $\g(\m)$ is isomorphic to the algebra of infinitesimal symmetries of $D_\m$, the Heisenberg algebra $\fn$ is isomorphic to the Tanaka symbol of the contact distribution $\Delta$, and for $k\geq 3$ the algebra of infinitesimal symmetries of $\mathcal F_X$ is isomorphic to  $\fs$, we obtain the geometric justification of Proposition \ref{propequal}.

In more details, let, as before, $D^{1}:=D$  Assume that $D^2$ is a  distribution as well. Denote by $(D^j)^{\perp}\subset T^*M$, $j=1,2$  the annihilator of $D^j$, namely 
\begin{equation}
(D^j)^{\perp}=\{(p,q)\in T_q^*M\mid p\cdot v=0\quad\forall\,v\in D^j(q)\}.
\end{equation}
Let $\pi\colon T^*M\mapsto M$ be the canonical projection. For any $\lambda\in T^*M$, $\lambda=(p,q)$, $q\in M$, $p\in T_q^*M$, let
$\varsigma(\lambda)(\cdot)=p(\pi_*\cdot)$ be the tautological Liouville form
and $\hat\sigma=d\varsigma$ be the standard symplectic structure on $T^*M$.

Since $\dim D=3$, the submanifold $D^\perp$ has odd codimension in $T^*M$, and the kernels of the restriction $\hat\sigma|_{D^\perp}$ are non-trivial. Moreover, it is easy to see (\cite{DZ2016}) that on
$\widehat\Sigma=D^\perp\backslash (D^2)^\perp$ these kernels are one-dimensional. They form a \emph{characteristic line distribution} on $\widehat\Sigma$, which will be denoted by $\widehat{\X}$. This line distribution defines in turn a \emph{characteristic 1-foliation} $\widehat \A$ on $D^\perp\backslash(D^2)^\perp$.

Now we define the same objects on the
projectivization of $D^\perp\backslash(D^2)^\perp$. As homotheties of the
fibers of $D^\perp$ preserve the characteristic line distribution, the
projectivization induces the \emph{characteristic line distribution} $\X$  on $\Sigma = \PDD$. Similar to above, it defines \emph{the characteristic $1$-foliation} $\A$ on $\Sigma$, and its leaves are called the
\emph{characteristic curves of the distribution $D$}.
 
The distribution $\X$ can be defined equivalently in the following way. Take the corank $1$ distribution on $\widehat\Sigma$ given by the Pfaffian equation $\varsigma|_{D^\perp}=0$ and push it forward under
projectivization to $\Sigma$. In this way we obtain a corank 1 distribution on $\Sigma$, which will be denoted by $\widetilde\Delta$. The distribution $\widetilde\Delta$ defines a quasi-contact structure on the even dimensional manifold $\Sigma$ and $\X$ is exactly the Cauchy characteristic distribution of this quasi-contact structure, i.e. the maximal subdistribution of $\widetilde \Delta$ such that
\begin{equation}
\label{Cauchy}
[\X, \widetilde \Delta]\subset \widetilde \Delta.
\end{equation}

Fix a point $\lambda_0\in \Sigma$. Then locally the space of leaves of $\A$ is a well-defines smooth manifold that will be called the \emph{characteristic leaf space} and will be denote by $\N$. Let  $\Phi\colon\Sigma\to\N$ be the canonical projection to the quotient manifold defined in a neighborhood of $\lambda_0$. By~\eqref{Cauchy} the push-forward of $\widetilde \Delta$ by $\Phi$ defines the contact distribution on $\N$, which will be denoted by $\Delta$.  

In this way we obtain the following double fibration on $\Sigma$:
\begin{equation}
\label{doublef}
\xymatrix{
	& {\Sigma} \ar[ld]_{\pi} \ar[rd]^{\Phi} &\\
	 M & &  \mathcal N} 
\end{equation}

The double fibration \eqref{doublef} induces the additional structure on each fiber of the contact distribution $\Delta$ of the characteristic leaf space $\N$. Namely, let $\mJ^0$ be the distribution of tangent spaces to the fibers of the fibration $\pi\colon\Sigma\rightarrow M$ and for every $\gamma\in\N$ define a curve 
\begin{equation}
\label{jac}
\lambda\mapsto J_\gamma^{0}(\lambda)=\Phi_*(\mJ^0(\lambda)), \quad \forall\lambda\in \gamma.
\end{equation}
In \eqref{jac} $\gamma$ is considered as the leaf of the characteristic foliation $\A$. It is easy to see (\cite{DZ2016}) that $\{\lambda\mapsto J_\gamma^{0}(\lambda):\lambda\in \gamma\}$ is a curve of isotropic subspaces of $\Delta(\gamma)$ with respect to the canonical conformal symplectic structure. Using operations of osculation and skew-orthogonal complement, we can build the following symplectic flag of subspaces of $\Delta(\gamma)$ from the curve~\eqref{jac}:
\begin{equation}
\label{jacflag}
\lambda \mapsto \left\{ 0\subset \dots \subset J_\gamma^1(\lambda)\subset J_\gamma^{0}(\lambda)\subset J_\gamma^{-1}(\lambda)
\subset \dots \subset \Delta(\gamma)\right
\},\quad\lambda\in\gamma,
\end{equation}
where the curve $\{\lambda \mapsto J_\gamma^{-i}(\lambda), \lambda\in \gamma\}$  is the $i$-th  osculation of the curve \eqref{jac} for $j\geq 0$, and $J_\gamma^i(\lambda)$ is th skew-orthogonal complement of $J^{-i-1}(\lambda)$ with respect to the canonical conformal symplectic structure on $\Delta(\gamma)$. So the contact distribution $\Delta$ on $\N$ together with the curve of symplectic flags~\eqref{jacflag}  on each fiber $\Delta$ is exactly the structure we mentioned in the first paragraph of this section.

Finally, if we implement the construction of this section to rank $3$ distribution $D_\m$ with constant symbol $\m$ as in \eqref{grad1}, then it is easy to see that 
\begin{enumerate}
\item $\Sigma$ can be identified with the simply connected Lie group $G'$  with the Lie algebra isomorphic to $\g'$  defined in~\eqref{firstP};
\item the characteristic line distribution is generated by the left invariant vector on $G'$ corresponding to the element $X\in\g'$;
\item the leaf space $\N$ can be identified with the quotient of $G'$ by the subgroup $\exp(\ad tX)$. This quotient is isomorphic to the Heisenberg Lie group with the Lie algebra $\fn$;
\item the contact distribution $\Delta$ is the left invariant distribution on $\N$ which is equal to $\fn_{-1}$ at the identity;
\item the curve of symplectic flags \eqref{jacflag} at the identity of the group $\N$ coincides with the curve $\mathcal F_X$  described in Remark~\ref{flatcurve}, and the curve \eqref{jacflag} at any other point of $\mathcal N$ is obtained by left shifts in~$\N$. 
\end{enumerate}

\end{document}